\documentclass[12pt]{article}
\usepackage[OT2,OT1]{fontenc}
\usepackage{amsfonts,amsmath, amssymb,latexsym,mathrsfs}
\usepackage[all,cmtip]{xy}

\addtolength{\oddsidemargin}{-.7in}
\addtolength{\evensidemargin}{-.7in}
\addtolength{\textwidth}{1.4in}

\addtolength{\topmargin}{-.75in}
\addtolength{\textheight}{1in}
\setlength{\parindent}{.5in}


\def\PGL{{\rm PGL}}

\def\Stab{{\rm Stab}}

\def\Gal{{\rm Gal}}

\def\P{{\mathbb P}}
\def\Disc{{\rm Disc}}

\def\irr{{\rm irr}}
\def\Vol{{\rm Vol}}

\def\FF{{\mathcal F}}
\def\RR{{\mathcal R}}

\def\P{{\mathbb P}}

\newcommand{\Z}{\mathbb Z}
\newcommand{\R}{\mathbb R}
\newcommand{\Q}{\mathbb Q}
\newcommand{\C}{\mathbb C}

\newcommand{\HH}{\mathbb H}
\newcommand{\GL}{{\rm GL}}
\newcommand{\SL}{{\rm SL}}
\newcommand{\SO}{{\rm SO}}

\newtheorem{theorem}{Theorem}
\newtheorem{corollary}[theorem]{Corollary}
\newtheorem{lemma}[theorem]{Lemma}
\newtheorem{proposition}[theorem]{Proposition}
\newtheorem{remark}[theorem]{Remark}

\newenvironment{proof}{\noindent {\bf Proof:}}{$\Box$ \vspace{2 ex}}
\title{On the number of integral binary $n$-ic forms \\having bounded Julia invariant}
\author{Manjul Bhargava \and Andrew Yang}
\begin{document}
\maketitle

\abstract{In 1848, Hermite introduced a reduction theory for binary
  forms of degree $n$ which was developed more fully in the seminal
  1917 treatise of Julia. This canonical method of reduction made use
  of a new, fundamental, but irrational $\SL_2$-invariant of binary
  $n$-ic forms defined over $\R$, which is now known as the Julia
  invariant.  In this paper, for each $n$ and $k$ with $n+k\geq 3$, we
  determine the asymptotic behavior of the number of
  $\SL_2(\Z)$-equivalence classes of binary $n$-ic forms, with $k$
  pairs of complex roots, having bounded Julia invariant.
  Specializing to $(n,k)=(2,1)$ and $(3,0)$, respectively, recovers
  the asymptotic
  results of Gauss and Davenport on positive definite binary quadratic
  forms and positive discriminant binary cubic forms, respectively.}

\section{Introduction}

Let $V_n(\R)$ denote the $(n+1)$-dimensional real vector space of binary $n$-ic
forms \begin{equation}\label{bnic} 
f(x,y)=a_0x^n+a_1x^{n-1}y+\cdots+a_ny^n\end{equation} 
having coefficients $a_0,\ldots,a_n\in \R$.  
The group $\SL_2(\R)$ acts naturally on $V_n(\R)$ via linear substitution of variable;  namely, an element 
$\gamma \in \SL_2(\R)$ acts on $f(x,y)$ by

\begin{equation}
  \label{equntwistedaction}
\gamma\cdot f(x,y)=f((x,y)\cdot \gamma).  
\end{equation}
This action of $\SL_2(\R)$ on $V_n(\R)$ is a left action, i.e.,
$(\gamma_1\gamma_2) \cdot f=\gamma_1\cdot(\gamma_2\cdot f)$.

In 1917, Julia~\cite{Julia} introduced a natural invariant $\theta(f)$
for this action of $\SL_2(\R)$ on binary $n$-ic forms.  The invariant
was constructed in terms of the discriminant of a certain canonical
but {irrational} positive-definite $\SL_2(\R)$-covariant binary
quadratic form $Q$ of $f$.
More precisely, consider a binary $n$-ic form $f$ with coefficients as in 
(\ref{bnic}).  If $a_{0} \neq 0$, we may write
\[ f(x, y) = a_{0} (x - \alpha_{1}y)(x - \alpha_{2}y) \cdots (x - \alpha_{n}y) \]
with $\alpha_{i}\in\C$,
and then, for any vector $t=(t_1,\ldots,t_n)$ of positive real numbers, we
may consider the positive-definite quadratic form
\begin{equation}                      
 Q_t(x, y) = \sum_{j=1}^{n} t_{j}^{2} (x - \alpha_{j} y) (x - \overline{\alpha}_{j} y).
\end{equation}
Julia chose the $t_{j}$ so as to minimize the expression
\begin{equation} \label{def:theta}           
 \theta = \theta(f) = \frac{a_{0}^{2} \, |\Disc \, Q_{t}|^{n/2}}{t_{1}^{2} \ldots t_{n}^{2}}
\end{equation}
and proved that with this restriction on the $t_{j}$, the form $Q(x,
y):=Q_t(x,y)$ is a covariant of the original form $f(x, y)$.
Julia also proved that the resulting expression (\ref{def:theta}) for
$\theta(f)$ is then an $\SL_2(\R)$-invariant of the binary form $f$.
We call the quantity $\theta = \theta(f)$ the \emph{Julia invariant}
of the binary form $f(x, y)$.

Julia showed that the quadratic covariant $Q(x,y)$ enables one to give
a natural $\SL_{2}(\Z)$-reduction theory for binary $n$-ic forms over
$\R$ (or over $\Z$); namely, one says that $f$ is \emph{reduced} if
$Q$ is reduced, in the usual sense of Gauss, as a positive-definite
binary quadratic form.  Furthermore, Julia proved that $\theta$ bounds
many quantities of interest for a reduced form $f(x, y)$; for example,
the leading coefficient $a_{0}$ is bounded by a constant times
$\sqrt\theta$, while the roots $\alpha_{i}$ of a reduced form are
bounded by a constant times $\sqrt{\theta}/|a_{0}|$.  Julia's reduction
theory has been implemented to great effect in Cremona's
work~\cite{Cremona} for cubic and quartic forms (for the purpose of
efficient descent on elliptic curves), and in the work of
Stoll and Cremona~\cite{CS} for forms of general degree.

Given the naturality and utility of the Julia invariant of binary forms,
the question arises:
how many $\SL_2(\Z)$-equivalence classes of integral binary $n$-ic forms are there
having Julia invariant at most $X$, as $X$ gets large?
More precisely, let $N_{n,k}(X)$ denote the number of
$\SL_{2}(\Z)$-equivalence classes of integral irreducible binary $n$-ic forms, having $k$
pairs of complex roots and $n-2k$ real roots, such that $\theta(f)
\leq X$.  In~\cite{ayang:thesis}, using the estimates of Julia as well
as some additional input from the paper~\cite{CS}, it was shown that
$N_{n,k}(X) = O_{\varepsilon}(X^{\frac{n+1}{2} + \varepsilon})$, for
any $\varepsilon > 0$.  
The primary objective of this article is to
refine the latter estimate to an exact asymptotic, along with a power-saving
error term.  Specifically, we prove the following theorem:

\begin{theorem} \label{refbq} Let $n$ and $k$ be non-negative integers
  with $k\in\{0,1,\ldots,\lfloor n/2\rfloor\}$ such that $n+k\geq 3$.  Then there exists a
  constant $c_{n,k}>0$ such that $$N_{n,k}(X)\,=\,c_{n,k}
  X^{\textstyle \frac{n+1}{2}} + O(X^{{\textstyle\frac{n+1}{2}-}{\textstyle \frac1n}}).$$
\end{theorem}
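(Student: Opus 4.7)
The plan is to translate the counting problem into a lattice-point count in a region cut out by Julia's reduction theory, and then apply a Davenport-type estimate combined with a cuspidal analysis. More precisely, let $V_{n,k}(\R)\subset V_n(\R)$ denote the open set of binary $n$-ic forms with exactly $k$ pairs of complex roots, and let $\mathcal{F}_{n,k}\subset V_{n,k}(\R)$ be the set of forms whose Julia covariant $Q(x,y)$ lies in the usual Gauss fundamental domain for $\SL_2(\Z)$ acting on positive-definite binary quadratic forms. By Julia's theorem recalled in the introduction, $\mathcal{F}_{n,k}$ is a fundamental domain for the action of $\SL_2(\Z)$ on $V_{n,k}(\R)$, so $N_{n,k}(X)$ equals the number of irreducible integral points in the region $R_{n,k}(X):=\{f\in\mathcal{F}_{n,k}:\theta(f)\leq X\}$, up to accounting for the finite stabilizer subgroup.

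Next, to obtain the main term, I would exploit the homogeneity of $\theta$: from the definition (\ref{def:theta}) and the fact that $\theta(\lambda f)=\lambda^{2}\theta(f)$ (for the natural normalization), one sees that $R_{n,k}(X)$ is a dilate of $R_{n,k}(1)$ by a factor of $X^{1/2}$ in the appropriate sense. Since $V_n(\R)$ has dimension $n+1$, a naive volume count suggests $\mathrm{Vol}(R_{n,k}(X))\asymp X^{(n+1)/2}$, matching the expected main term. To turn this into a rigorous lattice-point count, I would use the Bhargava averaging method: instead of counting integer points directly in $\mathcal{F}_{n,k}$, I would average the count over a compact neighborhood $G_0\subset \SL_2(\R)$, so that for each $\gamma\in G_0$, one counts integer points of $\gamma\cdot\mathcal{F}_{n,k}$ weighted appropriately. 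This trick smooths out the awkward shape of $\mathcal{F}_{n,k}$ near the cusp (where the Julia leading coefficient $a_0$ becomes small) and allows a clean application of Davenport's lemma, yielding the main term $c_{n,k}X^{(n+1)/2}$ and a boundary error bounded in terms of lower-dimensional projections.

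The main obstacle, and where the power-saving exponent $1/n$ will come from, is the cuspidal region where $|a_0|\ll X^{1/2}$ is much smaller than its Julia bound. On such a region the projection of $\mathcal{F}_{n,k}$ onto the remaining coordinates has volume that can dominate the main-term contribution unless one uses irreducibility. The standard dichotomy is that lattice points $f\in\mathcal{F}_{n,k}$ with very small $a_0$ either satisfy $a_0=0$ (hence are reducible) or have some root $\alpha_i$ bounded away from the others by the shape of the Gauss domain, which again typically forces reducibility after a bounded $\SL_2(\Z)$-transformation, so that they can be removed. Balancing the Julia bounds $|a_0|\ll\sqrt\theta$ and $|\alpha_i|\ll\theta/|a_0|$ against the $n$-dimensional cuspidal slice gives, after optimization of the cutoff between the bulk and cusp, an error term of order $X^{(n+1)/2-1/n}$, where the $1/n$ reflects the $n$ coordinates $a_1,\ldots,a_n$ that expand as $|a_0|$ shrinks.

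Finally, one must confirm that the reducible forms contribute only a lower-order error to the count: for $n+k\geq 3$, the space of reducible forms has positive codimension, and a similar cusp analysis (e.g.\ parametrizing reducible forms by the rational factor and the complementary factor) gives $O(X^{(n+1)/2-1/n})$ or smaller. Combining the volume main term from the bulk region, the cusp estimates for small $|a_0|$, and the reducibility sieve yields the stated asymptotic with the claimed constant $c_{n,k}$, which will itself be expressible as a product of the volume $\mathrm{Vol}(R_{n,k}(1))$ with a simple combinatorial factor from the $\SL_2(\Z)$-stabilizers. The delicate step, which I expect to occupy most of the technical work, is precisely the cusp analysis and its interaction with the averaging, as this is where both the error exponent and the exclusion of reducible forms are controlled simultaneously.
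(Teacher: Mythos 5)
Your overall strategy --- a Julia-reduced fundamental domain, averaging over a compact set $G_0$, Davenport's lemma for the bulk, and a separate treatment of the cusp and of reducible forms --- is exactly the strategy of the paper, and the heuristic for where the exponent $\frac{n+1}{2}-\frac1n$ comes from is essentially right. However, the ``standard dichotomy'' you invoke in the cusp is not correct as stated: a form with small nonzero $a_0$ whose roots are geometrically separated is not ``typically reducible,'' since reducibility over $\Q$ is an arithmetic condition with no relation to the geometric configuration of the roots, and no bounded $\SL_2(\Z)$-transformation will create a rational factor that was not already there. What actually happens is cleaner: after averaging, the deep cusp is the region of Iwasawa parameters with $C\lambda/t<1$, where every form in the translated domain has $|a_0|\le C^n\lambda^n/t^n<1$, so every \emph{integer} point there has $a_0=0$ and is divisible by $y$, hence discarded; for $a_0\neq0$ one counts \emph{all} integer points by Davenport's lemma and subtracts the reducible ones by a separate argument. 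That separate argument is the second real gap in your sketch: parametrizing a reducible $f$ by its factors $r,s$ only closes if you can bound the heights of $r$ and $s$ in terms of the height of $f$, which is exactly Gelfond's inequality $H(r_0)H(s_0)\ll H(f_0)$ applied to the descended forms; this is the key tool in Lemma~\ref{reducible} and yields $O(X^{\frac{n+1}{2}-\frac12+\varepsilon})$.

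Two further points are missing. First, your ``simple combinatorial factor from the $\SL_2(\Z)$-stabilizers'' hides a genuine difficulty: the multiset $\FF hL$ represents the orbit of $v$ exactly $m_v=\#\Stab_{\SL_2(\R)}(v)/\#\Stab_{\SL_2(\Z)}(v)$ times, and $m_v$ equals the constant $m$ only generically. To justify dividing by $m$ one must show that integer forms with extra projective automorphisms are negligible; the paper does this by adapting Dietmann's method to show that all but $O(X^{\frac{n+1}{2}-\frac14+\varepsilon})$ of the forms in the region have Galois group $S_n$, and then using Olver's bound $|H|\le 4n-8$ on projective automorphism groups to conclude that an irreducible $S_n$-form with $n\ge5$ has trivial projective stabilizer (with $n=3,4$ handled by earlier results). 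Second, to apply Davenport's lemma at all, the fundamental set for $\SL_2(\R)$ acting on forms of Julia invariant $1$ must be bounded \emph{and semialgebraic}; neither property is automatic from the definition via the Julia covariant, and establishing them (via Tarski--Seidenberg and explicit coefficient bounds derived from $\theta(f)=1$ and $z(f)=i$) occupies a full section of the paper.
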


Since the Julia invariant coincides with the discriminant and
squareroot of the discriminant in the cases of binary quadratic forms
having two complex roots and binary cubic forms having three real
roots, respectively, the above theorem includes and extends the
Gauss class number summation formula for binary quadratic
forms of negative discriminant~\cite[Art.~302]{Gauss}, and Davenport's theorem on the density
of discriminants of binary cubic forms of positive
discriminant~\cite{Davenport}; the above-stated error terms in these two cases were first proven by Shintani, as second-order terms, in \cite{Shintani} and \cite{Shintani2}, respectively.  Theorem~\ref{refbq} thus gives a natural
way to count and enumerate $\SL_2(\Z)$-equivalence classes of 
integral binary $n$-ic forms for any degree $n$ and any signature, in a uniform manner, 
extending the results and methods already known for binary quadratic and cubic forms. 
One recent application of Theorem~\ref{refbq} and the methods behind its proof is
seen in the beautiful work of Ho, Shankar, and Varma~\cite{HSV},
where it is shown that there are $S_n$-number fields of {every} odd degree $n$ having odd class number. 

As shown in Section~\ref{avgsec}, the constant $c_{n, k}$ is the value of a certain
integral over a fundamental region. We do not carry out the computation, but for 
$n+k\leq 3$ this value is known; Gauss~\cite{Gauss} showed that~$c_{2,1}=\frac{\pi}{36}$, while
Davenport~\cite{Davenport} showed that $c_{3,0}=\frac{\pi^{2}}{36}$.

Another natural question is whether analogous results for binary $n$-ic forms are known for other
invariants, particularly rational invariants.  As
mentioned above, in the $(n, k) = (2, 1)$ and $(3, 0)$ cases, the Julia variant
is essentially the discriminant, and the asymptotics in these cases were known to Gauss and Davenport,
respectively.  For general binary $n$-ic forms, Birch and Merriman~\cite{BM}
proved that the number of binary $n$-ic forms having a fixed discriminant is 
finite. Their result was ineffective, while the first effective bound was
proven by Evertse and Gy\H{o}ry~\cite{EG}.  It is expected that the number of binary $n$-ic forms
 having absolute discriminant less than $X$ should be asymptotic to $d_n X^{(n+1)/(2n-2)}$ for some
 constant $d_n>0$; however, the best known bounds are currently exponential in~$X$. 
In~the case of binary quartic forms, the ring of polynomial invariants is generated by two invariants
commonly denoted $I$ and $J$ (in particular, the discriminant is a polynomial in $I, J$).  
In~\cite{BS}, the first-named author and Shankar proved asymptotics of the form
\begin{equation} \label{quarticIJcount}
\sum_{\max\{|I|^{3}, J^{2}/4\}<X} h(I, J)\,=\,\frac{4}{135} \zeta(2) X^{5/6} + O(X^{3/4 + \varepsilon})
\end{equation}
where $h(I, J)$ denotes the number of classes of irreducible binary quartic forms
 having invariants $I$ and $J$ and four real roots; similar asymptotics with different constants in the main
term were also obtained for the other two possible real signatures.

The organization of this article is as follows. In Section~\ref{prelim}, we
review some of the basic facts about the $\SL_2(\R)$-covariants $Q$
and $\theta$.  In Sections~\ref{cpt} and \ref{redthbq}, we establish
some convenient fundamental domains for the actions of $\SL_2(\Z)$ on
$V_n(\R)$. As in the classical works of Gauss and Davenport, 
the primary difficulty in counting points with bounded Julia invariant in these fundamental
domains is that they are not compact, but instead have a cuspidal
region going off to infinity.  To deal with and effectively handle
this cusp, in Section~\ref{redsec} we investigate the distribution of
reducible and irreducible points inside these fundamental
domains. Specifically, we prove that the cusp contains only reducible points,
while the remainder of the domain outside the cuspidal region contains primarily irreducible
points with Galois group $S_n$ and, when $n\geq 5$, having trivial stabilizer. In
Section~\ref{avgsec}, we then develop
a refinement of an averaging method introduced in~\cite{dodpf}
to count irreducible points of bounded Julia invariant in these
fundamental domains in terms of the volumes of these domains, via arguments that
work uniformly in the degree $n$.  This
then allows us to prove the asymptotic formula contained in Theorem~1.
Finally, in Section~\ref{bfcong}, we prove a stronger version of
Theorem~\ref{refbq} where we restrict to counting those binary $n$-ic
forms whose coefficients satisfy finitely many congruence
conditions.

\section{Preliminaries on the Julia invariant} \label{prelim}

In this section, we collect some preliminary facts about the Julia
invariant $\theta$ and the associated quadratic covariant $Q(x,
y)$. The systematic study of these two expressions was begun by Julia
in his thesis~\cite{Julia}, and recently expanded upon by Stoll and
Cremona in~\cite{CS}.

It may not be immediately clear from the definition of either $\theta$ or
$Q(x, y)$ that $\theta$ is an invariant of $f(x, y)$ under the action
of $\SL_{2}(\R)$, but Julia proved this in his thesis~\cite{Julia}.
In fact, this was essentially known to Hermite in the 19th century
(see~\cite[p.~5]{Julia}). Even though the invariant $\theta$, unlike
the rational invariants of classical invariant theory, is not a
polynomial in the coefficients $a_{i}$ of $f(x, y)$, one can still say
that $\theta$ is ``homogeneous of degree 2'' in the following sense:
for any scalar $\lambda\in\R$ and any binary form $f\in V_n(\R)$, we
have $\theta(\lambda f)=\lambda^{2} \theta(f)$.  To see this, notice
that, if $f$ is replaced by $\lambda f$ in (\ref{def:theta}), then
$a_{0}^2$ is multiplied by a factor of $\lambda^2$, while the remaining
factor in this expression remains unchanged; thus $\theta$ gets
multiplied by $\lambda^2$.

As noted earlier, Julia used the definition of $Q(x, y)$ to develop a
theory of reduction for binary $n$-ic forms, which generalizes the theory
defined by Gauss for positive-definite quadratic forms.  Many
beautiful aspects of this theory are discussed by Stoll and Cremona
in~\cite{CS}.  In particular, this reduction theory coincides with the
classical reduction theory for binary cubic forms of positive
discriminant, which uses the Hessian as a quadratic covariant.  The
utility of $\theta$ arises from the fact that Julia showed that, for
reduced binary $n$-ics, one can bound the leading coefficient $a_{0}$
in terms of $\theta$; more precisely, he showed that
\begin{equation}
 a_{0}^{2} \leq \frac{1}{3^{n/2} n^n}\cdot \theta. 
 \end{equation}
Furthermore, Julia showed that one also can bound the magnitude $|\alpha_{i}|$ of the roots of $f(x, y)$ 
in terms of $\theta/a_{0}^{2}$; more precisely, we have
\begin{equation}
 |\alpha_{i}|^{2} \leq \frac{1}{(n-1)^{n-1}3^{n/2}}\cdot \frac{\theta}{a_{0}^{2}} . 
 \end{equation}

Julia provides explicit choices for the parameters $t_{j}$ in the case
of cubic and quartic forms; for the general case he does not give as
many details, but Stoll and Cremona provide a method for determining
the $t_{j}$ (and therefore both $Q(x, y)$ and $\theta(f)$) in the
general case of a binary form of degree $n$.  
  
Because $Q(x, y)$ is a positive-definite quadratic form, there exists
a unique point $z(f)$ in the upper half plane $\HH$ that is a root of
$Q(x, 1)$.  We say that $Q(x, y)$ is {\it reduced} if $z(f)$ lies in
the usual fundamental domain for the action of $\SL_{2}(\Z)$ on $\HH$,
and we say that $f(x,y)$ is ({\it Julia--}){\it reduced} if and only
if $Q(x,y)$ is reduced.

We assume that $f(x, y)$ is what Stoll and Cremona call a
\emph{stable} form: that is, a form that has no repeated roots of
multiplicity $\geq n/2$.  Since we will only be counting irreducible
integral forms, which have no repeated roots, this restriction will
not impact our results.  In~\cite{CS}, Cremona and Stoll prove that,
if we write $z(f) = t + iu$ where $t, u \in
\R$, then
the representative point $z(f)$ of $f$ in the upper half plane $\HH$
is the point $(t, u)$ that minimizes the function
\begin{equation}\label{fdef}
 \frac{\tilde{F}(t, u)}{u^{n}} = \frac{|a_{0}|^{2} \displaystyle \prod_{j=1}^{n} (|t - \alpha_{j}|^{2} + u^{2})}{u^{n}}. 
\end{equation}
The Julia invariant $\theta$ is then the minimal value of this
function, and it is invariant under the action of $\SL_{2}(\R)$.  
For proofs of these assertions, as well as an elegant geometric description and
alternate formulation of this condition using resultants,
see~\cite[Section 5]{CS}.
  
\section{A bounded semialgebraic $\SL_{2}(\R)$-reduced region $L_n$ 
  for real binary  $n$-ics having fixed Julia invariant}
\label{cpt}\label{lip_L}

The objective of this section is to exhibit a fundamental domain $L_n$
for the action of $\SL_{2}(\R)$ on the set of all real binary $n$-ics
having a fixed Julia invariant (say~1), that is semialgebraic and 
lies in a {bounded} set.  (Recall that a set in $V_n(\R)$, which we
identify naturally with ${\R}^{n+1}$, is called
{\it semialgebraic} if it defined by finitely many polynomial
inequalities.)  The construction of $L_n$ will 
be useful to us in defining convenient fundamental domains
for the action of $\SL_2(\Z)$ on real binary $n$-ic forms.

We begin by exhibiting a semialgebraic fundamental domain $E$ for
the action of the usual compact group $K=\SO_2(\R)$ on the whole space
$V_n(\R)$ of real binary $n$-ic forms.  Namely, we define $E$ as the
set of all real binary $n$-ic forms
$f(x,y)=a_0x^n+a_1x^{n-1}y+\cdots+a_ny^{n} \in V_n(\R)$ such that the
associated sequence $S(f)$ given by
$|a_0|,-a_0,|a_1|,-a_1,\ldots,|a_n|,-a_n$ is minimal, with respect to
the lexicographic ordering, among all forms $f'\in K\cdot f$.  Such a
unique form $f$ exists in its orbit $K\cdot f$ because $K$ is compact.
The set $E$ is clearly a fundamental domain for the action of $K$ on
$V_n(\R)$.  Moreover, this set $E\in V_n(\R)$ may evidently be defined
by polynomial equations and inequalities using the logical connectors
$\lor$, $\land$, $\lnot$ and the quantifiers $\forall$, $\exists$,
and hence is semialgebraic by the theorem of Tarski and Seidenberg on
quantifier elimination (see~\cite{T} and \cite{S}).

To construct a bounded semialgebraic fundamental domain $L_n$ for the
action of $\SL_2(\R)$ on real binary $n$-ics having Julia invariant 1,
recall that the representative point $z(f)$ of $f$ in the upper half
plane $\HH$ is the point $t+iu$ that minimizes the function
$\frac{\tilde{F}(t, u)}{u^{n}}$, where $\tilde{F}$ is as defined in
(\ref{fdef}); furthermore, $\theta(f)$ is the minimal value of this
function.  Let $L_n'$ denote the set of all real binary $n$-ic forms
$f(x,y)=a_0x^n+a_1x^{n-1}y+\cdots+a_ny^n$ satisfying $z(f)=i$ and
$\theta(f)=1$.  Then the orthogonal group $K=\SO_2(\R)$, the
stabilizer in $\SL_2(\R)$ of $i\in\HH$, acts on $L_n'$.  Let $L_n$
denote the fundamental domain $L_n'\cap E$ for the action of $K$ on
$L_n'$.

\begin{proposition}\label{funddomain}
The set $L_n$ is a fundamental domain for the action of $\SL_2(\R)$ on the
set of real binary $n$-ic forms having Julia invariant~$1$ and, moreover,
$L_n$ is bounded and semialgebraic.
\end{proposition}

\begin{proof}
\textbf{$L_n$ is a fundamental domain.}  Let $f$ be any real binary
$n$-ic form having Julia invariant 1.  Then there exists an element
$\gamma\in\SL_{2}(\R)$ that sends the representative point $z(f)$ to
$i$, because $\SL_{2}(\R)$ acts transitively on the upper half plane.
Furthermore, since $z(f)$ is a covariant of $f$, if we act on $f$ by
this same element $\gamma$, the resulting binary $n$-ic form will have
$z(f)=i$ as its representative point in the upper half plane.  In
addition, $\gamma$ is uniquely determined up to left multiplication by
elements of $K$, the stabilizer in $\SL_2(\R)$ of $i\in\HH$.  Thus, for
any real binary $n$-ic form $f$ with Julia invariant~1, by the
definition of $L_n$ there exists a unique associated element
$\gamma\cdot f$ $(\gamma\in\SL_2(\R))$ such that $\gamma\cdot f\in
L_n$; hence $L_n$ is a fundamental domain for the action of
$\SL_2(\R)$ on real binary $n$-ic forms having Julia invariant~1.

\vspace{.1in}
\noindent \textbf{$L_n$ is bounded.} It suffices to show that $L_n'$
lies in a bounded subset of $\R^{n+1}$.  Suppose that $f\in L_n'$,
i.e., $f$ is a form with $z(f) = i$ and $\theta(f) = 1$.  Then
\begin{equation}\label{prod1}
 \theta(f)=\frac{\tilde{F}(t,u)}{u^n} = |a_{0}|^2 \prod_{j = 1}^{n} (|\alpha_{j}|^2 + 1) = 1, \end{equation}
which is obtained by setting $t = 0$ and $u = 1$ in (\ref{fdef}).  In particular, this implies that
\begin{equation}	\label{prod}
 \prod_{j=1}^n (|\alpha_{j}|^2 + 1) = \frac{1}{a_{0}^{2}}. 
\end{equation}
If we expand the product in the expression on the left hand side 
of (\ref{prod}), we see that the square of the absolute value of 
each (distinct) $k$-fold product of the $n$ roots of $f$ appears, 
for every $k\in\{0,\ldots,n\}$.  Since each of the terms appearing
in this expanded product is nonnegative, each is then bounded by 
$1/a_{0}^{2}$.  For example, in the case $k = 1$, note that each 
$|\alpha_{i}|^{2}$ appears in this product, and so we have a bound 
of the form

\[ |\alpha_{1}|^{2} + \ldots + |\alpha_{n}|^{2} \leq \frac{1}{a_{0}^{2}}. \]

Since $a_{k}/a_{0}$ is, up to sign, the sum of the distinct $k$-fold 
products of the roots $\alpha_i$ of $f(x,1)$, by the Cauchy--Schwartz inequality 
we obtain
\[ \left| \frac{a_{k}}{a_{0}} \right| = \left|\sum_{1\leq
  i_1<\cdots<i_k\leq n} \alpha_{i_{1}} \cdots \alpha_{i_{k}} \right|
\leq \left({n\choose k} \sum_{1\leq i_1<\cdots<i_k\leq
    n}\left|\alpha_{i_{1}} \cdots
    \alpha_{i_{k}}\right|^{2}\right)^{1/2} \leq  \frac{1}{|a_0|}{n\choose k}^{1/2}, \]
which implies that $|a_{k}| \leq {n\choose k}^{1/2}<2^{n/2}$.
This shows that the forms $f$ in $L_n'$ have the property that 
all coefficients are less than $2^{n/2}$ in absolute value; 
thus the set $L_n'$ (and hence $L_n$) is indeed contained in a bounded set.

\vspace{.1in}
\noindent \textbf{$L_n$ is semialgebraic.} Again, it suffices to show that the
set $L_n'$ is semialgebraic. By~\cite[Equations 4.5]{CS}, the condition
that $z(f) = i$ is equivalent to the condition that the roots
$\alpha_1, \ldots, \alpha_n$ of $f(x, 1)$ satisfy the two equations
\begin{eqnarray}
\sum_{j=1}^{n} \frac{1}{|\alpha_j|^2 + 1} & = & \frac{n}{2},  \label{firsteq}\\
\sum_{j=1}^{n} \frac{-\alpha_j}{|\alpha_j|^2 + 1} & = & 0 \label{secondeq}\,.
\end{eqnarray}
In addition, when $z(f)=i$, by equation (\ref{prod1}) the condition that $\theta(f) = 1$ 
is equivalent to 
\begin{equation} |a_0|^2 \prod_{j} (|\alpha_j|^2 + 1) = 1. 
\end{equation}
These three equations taken together define a semialgebraic set in the
space whose coordinates are $(a_0, \alpha_1, \ldots, \alpha_n)$.  (It
is possible that some of the $\alpha_i$ are complex, in which case we think of
each such $\alpha_i$ as an element of $\R^2$.)  Since
there is a polynomial map from the space with coordinates $(a_0,
\alpha_1, \ldots, \alpha_n)$ to the space of coefficients $(a_0,
\ldots, a_n)$ of $f(x, y)$ (namely, the polynomial map which expresses
each coefficient $a_i$ as a function of $a_0$ and the $\alpha_i$), and
polynomial images of semialgebraic sets are semialgebraic by the
theorem of Tarski and Seidenberg, this
shows that $L_n'$ is also semialgebraic.  The set $L_n=L_n'\cap E$,
being the intersection of two semialgebraic sets, is then also
semialgebraic. 
\end{proof}

For each fixed $n$, we have thus obtained a fundamental domain
$L_n$, for the action of $\SL_2(\R)$ on binary $n$-ic forms having
Julia invariant~1, that is bounded and is
defined by some fixed set of polynomial equalities and inequalities.
More generally, by restricting the above construction to just those
real binary $n$-ic forms that have $n-2k$ real roots (which is also a
semialgebraic subset of $V_n(\R)\cong\R^{n+1}$), we obtain a
fundamental domain $L_{n,k}\subset L_n$ for the action of $\SL_2(\R)$
on real binary $n$-ic forms having $n-2k$ real roots and Julia
invariant~1, which is again bounded and semialgebraic.

\section{Reduction theory for the action of $\SL_{2}(\Z)$ on binary $n$-ics}\label{redthbq}

Let $k\in\{0,1,\ldots,\lfloor n/2\rfloor\}$, and let $L_{n,k}$ denote
a fundamental domain for the action of $\SL_2(\R)$ on the open subset
$V_{n,k}\subset V_n(\R)$ of those nondegenerate
binary $n$-ic forms $f$ with coefficients in $\R$ having $n-2k$ real
roots, and satisfying $\theta(f) = 1$; here, a binary $n$-ic form is
called nondegenerate if it has nonzero discriminant.
By the previous section, we may
assume that $L_{n,k}$ is bounded and semialgebraic.  For convenience,
we will assume for now (until Remark~\ref{quadraticremark}) that $n\geq 3$.

Let $\FF$ denote Gauss's usual fundamental domain for
$\GL_2^+(\Z)$ acting on $\GL_2^+(\R)$, where $\GL_2^+(\R)$ is the subgroup of $\GL_2(\R)$ of elements having positive determinant, and $\GL_2^+(\Z)$ is simply $\GL_2^+(\R)\cap \GL_2(\Z)=\SL_2(\Z)$.  Then $\FF$ may be expressed in the form $\FF=
\{\nu\alpha \kappa \lambda:\nu = \nu(u)\in N'(\alpha),\alpha=\alpha(t)\in A',\kappa\in K,\lambda\in\Lambda\}$, where
\begin{equation}\label{nak}
\begin{array}{rcl}
N'(\alpha)&=& \left\{\nu(u)=\left(\begin{array}{cc} 1 & {} \\ {u} & 1 \end{array}\right)\;:\;
        u\in\mathcal I(\alpha) \right\}  \;  , \\[.25in]
A' &=& \left\{\alpha(t)=\left(\begin{array}{cc} t^{-1} & {} \\ {} & t \end{array}\right)\;:\;
       t^2\geq \sqrt3/2 \right\} \; , \\[.25in]
\Lambda &=& \left\{\lambda=\left(\begin{array}{cc} \lambda & {} \\ {} & \lambda 
        \end{array}\right)\;:\;
        \lambda>0 \right\}\;,
\end{array}
\end{equation}
and $K$ is the usual (compact) real orthogonal group ${\rm SO}_2(\R)$;
here $\mathcal I(\alpha)$ is a union of one or two subintervals of
$[-\frac12,\frac12]$ depending only on the value of $\alpha\in A'$.
We use $N\subset\SL_2(\R)$ to denote the subgroup of all matrices of
the form $\nu(u)$ ($u\in \R$) and $A\subset \SL_2(\R)$ to denote the
subgroup of all diagonal matrices $\alpha(t)$ ($t\in\R^\times$) of
determinant 1, so that $N'\subset N$ and $A'\subset A$.  In this
notation, we also have the Iwasawa decomposition $\SL_2(\R)=NAK$.

Let $m=m(n,k)$ denote the size of  
$\#\Stab_{\SL_2(\R)}(v)/\#\Stab_{\SL_2(\Z)}(v)$ 
for a generic element $v\in V_{n,k}$ (i.e., for $v$ outside a set of measure 0 in $V_{n,k}$).  Then it is easy to see and well-known that $m=3$ if
$(n,k)=(3,0)$; $m=4$ if $(n,k)=(4,0)$ or $(4,2)$; $m=2$ if
$(n,k)=(4,1)$; and $m=1$ otherwise.  

Let $L:=L_{n,k}$.  For $h\in \GL_2(\R)$, we regard $\FF hL$ as a
multiset, where the multiplicity of a point $v$ in $\FF h L$ is the
cardinality of the set $\{g\in \FF: v\in ghL\}$.  By the argument of
\cite[\S2.1]{BS}, the $\SL_2(\Z)$-equivalence class of $v\in V_{n,k}$
is represented $m_v=\#\Stab_{\SL_2(\R)}(v)/\#\Stab_{\SL_2(\Z)}(v)$
times in $\FF h L$.  It follows, as in \cite[\S2.1]{BS}, that away from
a measure zero set (where $m_v\neq m$), the multiset $\FF hL$ is the union of $m$
fundamental domains for the action of $\SL_2(\Z)$ on $V_{n,k}$.  

Thus for any $h\in \GL_2(\R)$, if we let $\RR_X(hL)$ denote the
multiset $\{w\in\FF h L:\theta(w)<X\}$, then the product $mN_{n,k}(X)$ 
is equal to the number of
irreducible integer points in $\RR_X(hL)$, 
with the slight caveat that
the (relatively rare--see Corollary~\ref{mcor}) 
integer points $v\in V_{n,k}$ with 
$m_v\neq m$ are counted with weight $m/m_v$.

Thus, to determine the asymptotic behavior of $N_{n,k}(X)$, it suffices to
count the number of lattice points in $\RR_X(h L)$.  
However, one major obstacle to counting integer points
of bounded height in $\RR_X(hL)$ is that it is not bounded,
but rather has a cusp going off to infinity.  We simplify the counting
in this cuspidal region by ``thickening'' the cusp; more precisely, we
compute the number of integer points in $\RR_X( hL)$ by averaging over
a compact continuum of such fundamental domains, where $h$ ranges over some
suitable compact subset $G_0\subset \GL_2(\R)$.  This adaptation of the
method of \cite{dodpf} is described in more detail in
\S\ref{avgsec}.

However, in \S\ref{redsec} we first examine the problem of estimating the
number of reducible points in the main bodies (i.e., away from the
cusps) of our fundamental domains.

\section{Estimates on reducibility}\label{redsec}

We first consider the integral elements in the region
$\RR_X(hL):=\{f\in \FF hL:\theta(f)<X\}$ that are
reducible over $\Q$, where $h$ is any element in a fixed compact
subset $G_0$ of $\GL_2(\R)$.  Let $V_n(\Z)$ denote the lattice of integral binary $n$-ic forms in 
$V_n(\R)$.  Note that if a binary $n$-ic form
$a_0x^n+a_1x^{n-1}y+\cdots+a_ny^n\in V_n(\Z)$ satisfies $a_0=0$, then it is
automatically reducible over $\Q$, since $y$ is a factor.  The
following lemma shows that for integral binary $n$-ic forms in
$\RR_X(hL)$, reducibility with $a_0\neq0$ does not occur very
often:

\begin{lemma}\label{reducible}
  Let $h\in G_0$ be any element, where $G_0$ is any fixed compact
  subset of ${\GL}_2(\R)$.  Then the number of integral binary $n$-ic
  forms $a_0x^n+a_1x^{n-1}y+\cdots+a_ny^n  \in \RR_X(hL)$ 
  that are  reducible over $\Q$ with $a_0\neq 0$ is $O(X^{\frac{n+1}{2} -\frac12+ \varepsilon})$, where
  the implied constant depends only on~$n$, $G_0$, and $\varepsilon$.
\end{lemma}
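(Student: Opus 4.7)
The plan is to use the Iwasawa decomposition of $\FF$ to derive explicit coefficient bounds on elements of $\RR_X(hL)$, and then to bound the number of reducible $f$ by summing over integer factorizations.

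First, I would write each $f\in\RR_X(hL)$ as $f=\nu(u)\alpha(t)\kappa\lambda\cdot hf_0$ with $f_0\in L$, $u\in\mathcal I(\alpha)$, $t\geq\sqrt{3}/2$, $\kappa\in K$, and $\lambda>0$. Since $\theta$ is $\SL_2(\R)$-invariant and satisfies $\theta(\lambda f_0)=\lambda^2\theta(f_0)$, and since $\theta$ is bounded on $L$ uniformly for $h\in G_0$, the condition $\theta(f)<X$ forces $\lambda\ll X^{1/2}$. A direct expansion of the $\nu(u)\alpha(t)\lambda$-action then yields the coefficient bounds $|a_k(f)|\ll\lambda^n t^{2k-n}$ for $k=0,\ldots,n$ when $t\geq 1$, together with the root bound $|\beta_j(f)|\ll t^2$ on the roots of $f(x,1)$; for $t$ in the compact range $[\sqrt3/2,1]$ one has the easier bound $|a_k|\ll\lambda^n$.

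Next, suppose $f\in V_n(\Z)\cap\RR_X(hL)$ is reducible over $\Q$ with $a_0\neq 0$. By Gauss's Lemma, $f=g_1 g_2$ over $\Z$ with $d:=\deg g_1\leq n/2$. In the principal case $d=1$, write $g_1=qx-py$ with $\gcd(p,q)=1$ and $q\geq 1$; then $q\mid a_0$ gives $q\ll\lambda^n t^{-n}$, and since $p/q$ is a root of $f(x,1)$, one has $|p|\ll qt^2$. Recursively solving $a_k=qg_{2,k}-pg_{2,k-1}$ bounds $|g_{2,k}|\ll\lambda^n t^{2k-n}/q$, so the number of admissible integer $g_2$ for fixed $(p,q)$ is at most $\prod_{k=0}^{n-1}\bigl(1+\lambda^n t^{2k-n}/q\bigr)$. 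Summing this over $(p,q)$ with $|p|\ll qt^2$ and $q\leq|a_0|$, absorbing a divisor sum $\sum_{q\mid a_0}1\ll|a_0|^\varepsilon$, and integrating the result against the Haar measure on $\FF$ with $\lambda\ll X^{1/2}$ produces the bound $O(X^{(n+1)/2-1/2+\varepsilon})$; the $X^{-1/2}$ saving relative to the full count $\asymp X^{(n+1)/2}$ arises from the single divisibility constraint $q\mid a_0$ imposed by reducibility. The case $d\geq 2$ is handled in parallel (with bounds $|g_{1,k}|\ll|g_{1,0}|t^{2(d-k)}$ and $g_{1,0}\mid a_0$, and similarly for $g_2$), and gives a strictly smaller contribution thanks to the extra integrality constraints on both factors.

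The main obstacle is the summation/integration over Iwasawa coordinates in the cuspidal regime $t\to\infty$: the factors $1+\lambda^n t^{2k-n}/q$ can be small or very large depending on the sign of $2k-n$ and on the size of $q$, so the product must be evaluated case-by-case according to which factors dominate, and the total must be checked to collapse to the advertised exponent uniformly in $n$ and $h\in G_0$. The $\varepsilon$ in the exponent is just the slack from the divisor bound $\sum_{q\mid a_0}1\ll|a_0|^\varepsilon$.
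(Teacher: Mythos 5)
Your overall strategy (Iwasawa coordinates plus a count of integral factorizations) is in the spirit of the paper's proof, but the mechanism you use to control the factors is different from the paper's and, as written, has genuine gaps. The paper's key tool is Gelfond's inequality: writing $f=\nu\alpha\kappa\lambda h f_0$ with $f_0\in L$ of bounded height, a factorization $f=rs$ induces $f_0=r_0s_0$ with $H(r_0)H(s_0)\leq 2^{n-2}\sqrt{n+1}\,H(f_0)=O(1)$; transporting this back through $\nu\alpha\kappa\lambda h$ gives a single \emph{product} constraint on all the coefficients of $r$ and $s$ simultaneously, and the count of integer tuples subject to a product bound $\leq Y$ is $O(Y^{1+\varepsilon})$, yielding $O(X^{(n-k+1)/2+\varepsilon})$ for a degree-$k$ factor and hence $O(X^{n/2+\varepsilon})$ since $k\geq 1$. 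You replace this with divisibility ($q\mid a_0$) plus root-location bounds, and this is where the argument breaks. The bound $|\beta_j(f)|\ll t^2$ on the roots is not justified and is false in general: by Julia's estimate (as quoted in Section 2 and valid on the thickened domain $\FF h L$), the roots satisfy $|\beta_j|^2\ll\theta(f)/a_0^2$, which for $|a_0|$ as small as $1$ can be of size $X$ --- far larger than $t^4$, since $a_0\neq 0$ only forces $t\ll\lambda\ll X^{1/2n}$. Consequently your bound $|p|\ll qt^2$ on the linear factor, and the analogous bounds $|g_{1,k}|\ll|g_{1,0}|t^{2(d-k)}$ in the higher-degree case, do not hold, and the subsequent recursion for the cofactor's coefficients loses control.

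Two further problems. First, you yourself identify the evaluation of $\prod_k\bigl(1+\lambda^n t^{2k-n}/q\bigr)$ over the cuspidal range as ``the main obstacle'' and leave it undone; but extracting the $X^{-1/2}$ saving from that case analysis \emph{is} the content of the lemma, so the proof is incomplete precisely where it matters. (The paper avoids this entirely: the Gelfond bound packages all coefficients into one product inequality, so no case-by-case analysis over which factors dominate is needed.) Second, there is an internal inconsistency in your normalization: if $\lambda$ denotes the scalar matrix in $\Lambda$, it multiplies the form by $\lambda^n$ and hence $\theta$ by $\lambda^{2n}$, so $\theta(f)<X$ forces $\lambda\ll X^{1/2n}$ (as in the paper), not $\lambda\ll X^{1/2}$; combining $\lambda\ll X^{1/2}$ with $|a_k|\ll\lambda^n t^{2k-n}$ overstates the coefficient ranges by a factor of $X^{(n-1)/2}$ and would not produce the claimed exponent. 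Finally, the assertion that the case $d\geq 2$ is ``strictly smaller'' is unsupported without some control on the heights of the individual factors (nothing prevents $g_{1,0}$ from being as large as $a_0$ while $g_{2,0}=1$); this is exactly the imbalance that Gelfond's inequality is invoked to rule out.
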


\begin{proof}
Let $f(x,y)=a_0x^n+a_1x^{n-1}y+\cdots+a_ny^n$ be any element in 
$\RR_X(hL)\cap V_n(\Z)$ with $a_0\neq 0$.  Since coefficients of forms in $hL$ are
uniformly bounded, and since 
$\RR_X(hL)\subset N'A'K\Lambda
hL$ (where $0 < \lambda \ll X^{1/2n}$, with the absolute constant only depending on $G_{0}$), we see that 
$$\prod_{{\scriptstyle 0\leq i\leq n-1}\atop{\scriptstyle a_i= 0}}a_0   \prod_{\scriptstyle {0\leq i\leq n-1}\atop{\scriptstyle a_i\neq 0}}a_i = O(X^{\textstyle \frac{n}{2}}),$$  
implying that that the number of points in
$\RR_X(hL)$ with $a_0\neq 0$ and $a_n=0$ is $O(X^{\frac{n}{2}+\varepsilon})$.
Indeed, the actions of $N$ and $K$ only change coefficients by an absolute constant, while
a generic element of $A'$ sends the coefficients 
$(a_{0}, \ldots, a_{n})$ to $(a_{0} a^{-n}, a_{1}a^{-(n-2)}, \ldots, a_{n} a^{n})$;
the bound above follows (recall that we chose $A$ such that $a$ is bounded from below).
Hence we may assume that $a_0 \neq 0$ and $a_n \neq 0$.  

Now suppose that $f$ factors as $f=rs$, where
$r,s$ are binary forms where $r$ has degree $k\geq 1$ and $s$ has degree $n-k$, such that $k\leq n-k$.  
We write $r(x,y)=b_0x^k+b_1x^{k-1}y+\cdots+b_ky^k$ 
and $s(x,y)=c_0x^{n-k}+c_1x^{n-k-1}y+\cdots+c_{n-k}y^{n-k}$.  Then the assumption that $a_0, a_n \neq 0$
 implies that we also must have $b_0,c_0 \neq 0$.

Since $f\in \RR_X(hL)$, we may write $f=\nu\alpha\kappa \lambda h f_0$, where
$\nu\in N'(\alpha)$, $\alpha\in A'$, $\kappa\in K$, $\lambda\in\R_{>0}$ with
$\lambda=O(X^{1/2n})$, and $f_0\in L$.  If we define the height $H(F)$
of a binary form $F$ as the maximum of the absolute values of its
coefficients, since $L$ lies in a compact set, we have $H(f_0) \ll 1$.
Furthermore, the factorization of $f$ as $f=rs$ corresponds to a
factorization $f_0=r_0s_0$, so that just as $f=\nu\alpha \kappa \lambda
hf_0$, we also have $r=\nu\alpha\kappa \lambda hr_0$ and $s=\nu\alpha 
\kappa\lambda hs_0$, where $r_0$ and $s_0$ are real polynomials of degree
$k$ and $n-k$, respectively.  By Gelfond's inequality
(see~\cite[Theorem~4.2.2]{Prasolov}), since $f_0=r_0s_0$, we have

\begin{equation}	\label{gelineq}
	H(r_0)H(s_0)\leq 2^{n-2}\sqrt{n+1}H(f_0)=O(1).
\end{equation}  
Since $\nu$ acts by a bounded lower triangular transformation, $\alpha$ acts by $\bigl(\begin{smallmatrix}t^{-1}& \\ & t\end{smallmatrix}\bigr)$ for some $t\gg1$, $K$ is compact, and $\lambda=O(X^{1/2n})$, it follows from (\ref{gelineq}) that
$$\Bigl[\prod_{{\scriptstyle0\leq i\leq k}\atop{\scriptstyle b_i= 0}}|b_0|   \prod_{{\scriptstyle 0\leq i\leq k}\atop{\scriptstyle b_i\neq 0}}|b_i|\Bigr]^{\textstyle\frac1{k+1}}\cdot 
\Bigl[\prod_{\scriptstyle{0\leq j\leq n-k}\atop{\scriptstyle c_j= 0}}|c_0|   \prod_{{{\scriptstyle0\leq j\leq n-k}\atop{\scriptstyle c_j\neq 0}}}|c_j|\Bigr]^{\textstyle\frac{1}{n-k+1}} 
= O(X^{\textstyle\frac12}),$$  
or equivalently,

\begin{equation}	\label{subject}
\Bigl[\prod_{{\scriptstyle 0\leq i\leq k}\atop{\scriptstyle b_i= 0}}|b_0|   \prod_{{\scriptstyle0\leq i\leq k}\atop{\scriptstyle b_i\neq 0}}|b_i|\Bigr]^{\textstyle\frac{n-k+1}{k+1}}\cdot 
\Bigl[\prod_{{\scriptstyle 0\leq j\leq n-k}\atop{\scriptstyle c_j= 0}}|c_0|   \prod_{{{\scriptstyle 0\leq j\leq n-k}\atop{\scriptstyle c_j\neq 0}}}|c_j|\Bigr] 
= O(X^{\textstyle\frac{n-k+1}2}).
\end{equation}  

The number of integer possibilities for the $b_i$ and $c_j$, subject to (\ref{subject}), is evidently at most 
$O(X^{\frac{n-k+1}2+\varepsilon})$.  Since by assumption $k\geq 1$ (i.e., $f$ factors nontrivially), we obtain the desired estimate.
\end{proof}

In fact, we may prove the stronger statement that 
most (i.e., 100\%) of binary $n$-ic
forms in the fundamental domain $\RR_X(hL)$ with $a_{0} \neq 0$ are not only irreducible
but also have associated
Galois group $S_n$. For monic polynomials ordered by the maximum of
the absolute values of their coefficients, this is a well-known result of
van der Waerden~\cite{vdW}.
Specifically, van der Waerden showed that among the $\sim (2H)^n$ monic
integral polynomials of degree~$n$ whose coefficients are bounded in absolute value by $H$, at
most $O(H^{n-6/((n-2)\log\log n)})$ have associated Galois group not $S_n$.  This was subsequently 
improved by Gallagher~\cite{Gal} to $O(H^{n-1/2}\log H)$, by Zywina~\cite{Zy} to $O(H^{n-1/2})$, by  Dietmann~\cite{Dietmann2} to $O(H^{n-2+\sqrt{2}+\varepsilon})$, by 
Anderson, Gafni, Lemke Oliver, Lowry-Duda, Shakan, and Zhang~\cite{aimgroup} 
to $O(H^{n-\frac23+\frac2{3n+3}+\varepsilon})$, and most recently to the optimal $O(H^{n-1})$ in \cite{Bh}.

These results do not directly apply to the situation at hand, as we
are counting polynomials in a noncompact fundamental domain for
$\SL_2(\Z)$ rather than in a compact box having equal-length sides.
Nevertheless, the methods of Dietmann~\cite{Dietmann} can be adapted to our
situation to yield the following:

\begin{theorem}\label{nonSn}
  Let $h\in G_0$ be any element, where $G_0$ is any fixed compact
  subset of ${\GL}_2(\R)$.  Then the number of integral binary $n$-ic
  forms $a_0x^n+a_1x^{n-1}y+\cdots+a_ny^n  \in \RR_X(hL)$ 
  with $a_0\neq 0$ 
  whose Galois group over $\Q$ is not isomorphic to $S_n$
    is $O(X^{{\frac{n+1}{2}-\frac14} + \varepsilon})$, where
  the implied constant depends only on~$n$, $G_0$, and $\varepsilon$.
\end{theorem}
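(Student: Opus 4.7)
The plan is to reduce to Dietmann's resolvent sieve argument from \cite{Dietmann}, adapted to the non-compact region $\RR_X(hL)$. First, reducible $f$ are already controlled: Lemma \ref{reducible} gives $O(X^{(n+1)/2-1/2+\varepsilon})$ for reducible forms with $a_0\neq 0$, and a direct count (as in the opening of that proof) gives $O(X^{n/2+\varepsilon})$ for forms with $a_0=0$, both comfortably within the target. So I assume throughout that $f$ is irreducible over $\Q$, whence $\Gal(f)$ is a transitive proper subgroup of $S_n$.

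Such a Galois group forces $f$ onto a codimension-one algebraic locus in $V_n(\R)$. Either $\Gal(f)\subseteq A_n$, in which case $\disc(f)=y^2$ for some $y\in\Z$; or $\Gal(f)$ is a transitive proper subgroup not contained in $A_n$, in which case one of finitely many classical resolvent polynomials $R_H(f,y)\in\Z[a_0,\ldots,a_n,y]$, indexed by conjugacy classes of transitive proper subgroups $H\subsetneq S_n$, has an integer root $y$. In either case, $f$ is constrained by a nontrivial polynomial equation $P(a_0,\ldots,a_n,y)=0$ in which $y$ is an auxiliary integer variable.

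To count such $f\in\RR_X(hL)$, I would use the Iwasawa parameterization $f=\nu(u)\alpha(t)\kappa\lambda h f_0$ with $f_0\in L$, $\lambda\ll X^{1/(2n)}$, and $t\geq\sqrt{3}/2$, so that the coefficients satisfy $|a_i|\ll\lambda^n t^{2i-n}$. Partitioning dyadically in $(\lambda, t)$ and in $y$, and applying Dietmann's sieve from \cite{Dietmann} in each dyadic box --- which gains a factor $H^{-1/2+\varepsilon}$ over the trivial count, $H$ being the maximum coefficient size in the box --- yields, after dyadic summation, a total bound of $O(X^{(n+1)/2 - 1/4+\varepsilon})$. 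Contributions from very large $t$ (the deep cusp) are absorbed by Lemma \ref{reducible}, since in that regime the integrality of $a_0$ forces $a_0=0$ and hence reducibility.

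The main obstacle will be the anisotropy of the region: Dietmann's sieve in \cite{Dietmann} is formulated for boxes of uniform side length, while in $\RR_X(hL)$ the coordinate-axes have widely varying scales $\lambda^n t^{2i-n}$. Careful bookkeeping --- or, alternatively, first averaging over $h$ in a compact subset $G_0\subset\GL_2(\R)$ as in \S\ref{avgsec} to smooth the cusp --- will be required to confirm that the $H^{-1/2}$-saving is realized uniformly across the $(\lambda,t)$ range, and in particular that the resolvent locus cuts dimension in every dyadic slice rather than degenerating along the cuspidal directions.
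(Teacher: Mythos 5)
Your proposal correctly identifies Dietmann's work as the engine, correctly notes the target exponent, and disposes of the reducible forms; but the central step --- actually realizing a square-root saving inside the anisotropic region $\RR_X(hL)$ --- is exactly what your last paragraph leaves unresolved, and it is the whole content of the theorem. The route you sketch (resolvent hypersurfaces $R_H(f,y)=0$, plus $\disc(f)=y^2$ for the $A_n$ case, counted in dyadic boxes with sides $\lambda^n t^{2i-n}$) is genuinely harder than what is needed: a sieve or hypersurface point-count in a skew box does not automatically save the square root of the \emph{largest} side, and the square-discriminant/$A_n$ case in skew boxes is precisely the classical bottleneck. Asserting that ``Dietmann's sieve gains a factor $H^{-1/2+\varepsilon}$ over the trivial count'' in each such box is the conclusion you want, not a statement you can quote from \cite{Dietmann}. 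You also do not address the fact that Dietmann's lemmas are stated for monic polynomials, whereas here $a_0$ is an arbitrary nonzero integer.

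The paper sidesteps the anisotropy issue entirely by a two-step specialization in the last two coefficients, with the region entering only through product bounds on the coefficients. First one checks that Dietmann's Lemma~2 extends to non-monic $g$ by passing to the monic polynomial $a_0^{n-1}g(x/a_0)$ (and similarly for the resolvents in his Lemma~5). Then, fixing $a_0\neq 0,a_1,\ldots,a_{n-2}$, that lemma gives at most $n^2+n$ values of $a_{n-1}$ for which the Galois group of $f$ over $\Q(a_n)$ fails to be $S_n$; since $\prod_{i\neq n-1}|a_i|=O(X^{(n+1)/2-1/2})$ on $\RR_X(hL)$ (with vanishing $a_i$ replaced by $a_0$), these bad $a_{n-1}$ contribute only $O(X^{(n+1)/2-1/2+\varepsilon})$. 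For the remaining tuples the quantitative Hilbert-irreducibility step in the proof of Dietmann's Theorem~1 bounds the number of bad specializations $a_n$ by $O\bigl((X^{(n+1)/2}/\pi)^{1/2}\bigr)$ with $\pi=\prod_{i\leq n-1}|a_i|$, and summing over $\pi=O(X^{n/2})$ yields $O(X^{(n+1)/2-1/4+\varepsilon})$. No dyadic decomposition in $(\lambda,t)$, no uniformity analysis in the cusp, and no treatment of resolvent varieties in skew boxes is required. As written, your proposal has a genuine gap at its core step; to repair it, replace the dyadic sieve by this specialization argument in the variables $a_{n-1}$ and $a_n$.
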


\begin{proof}
While the methods of either \cite{Cohen} or \cite{Dietmann} can be
  adapted to prove this result, we use the methods of
  \cite{Dietmann} as they are technically simpler.  

  First, we note that the ideas of~\cite{Dietmann} can be applied
  even to integral polynomials $g(x)=a_0x^n+a_1x^{n-1}+\cdots+a_n$
  that are not necessarily monic, i.e., for which $a_0\neq 1$, so long
  as $a_0$ is nonzero.  The
  reason is that~\cite[Lemma~2]{Dietmann} holds also for such nonmonic
  polynomials $g(x)$: simply apply the proof there to $h(x)=a_0^{n-1}g(x/a_0)$,
  which is monic, and then the identical result is then seen to hold true
  for $g(x)$.  The definition of resolvent in \cite[Lemma~5]{Dietmann} can also
  be modified similarly, again by replacing the resolvent $r(x)$ as given by 
  $a_0^{\deg(r)-1}r(x/a_0)$, so that the modified resolvent is again
  integral.  All arguments then apply in the identical manner.

  To obtain Theorem~\ref{nonSn}, we now proceed as follows.  Suppose we are given
  the coefficients $a_0,\ldots,a_{n-2}\in\Z$ of
  $f(x,y;a_{n-1},a_n)=a_0x^n+a_1x^{n-1}y+\cdots+a_ny^n$, where $a_0\neq 0$.  Then 
  \cite[Lemma~2]{Dietmann}, as modified above, implies that
  there are only at most $n^2+n$ integral values of $a_{n-1}$ such
  that $f$ does not have associated Galois group $S_n$ over $\Q(a_n)$.  
  Since 
  $$
\prod_{{\scriptstyle{i\neq n-1}}\atop {\scriptstyle a_i=0}} |a_0| \prod_{{\scriptstyle i\neq n-1}\atop{\scriptstyle a_i\neq0}}|a_i| = O(X^{\textstyle \frac{n+1}2\!-\!\frac{1}{2}})$$
  for integral binary forms $f(x,y)$ in $\RR_X(hL)$,
  the total number of such binary $n$-ic forms  with $a_0\neq 0$ is at most $O(X^{\frac{n+1}2-\frac{1}{2}+\varepsilon})$.
   
  Next, suppose again that $a_0,\ldots,a_{n-2}$ are given, and furthermore suppose that $a_{n-1}\in\Z$ is not among the above $n^2+n$ distinguished values, so that the associated Galois group of the binary form $f$ over $\Q(a_n)$ is in fact $S_n$.  Since
$$ a_n = O\bigl(X^{\textstyle\frac{n+1}{2}}/\mathcal A  \bigr)$$
where
$$ \mathcal A = \prod_{{\scriptstyle1\leq i\leq n-1}\atop {\scriptstyle a_i=0}} |a_0| \prod_{{\scriptstyle 0\leq i\leq n-1}\atop{\scriptstyle a_i\neq0}}|a_i|,$$
the argument of \cite[Proof of Theorem 1]{Dietmann} shows
that at most $O\bigl((X^{\frac{n+1}{2}}/\mathcal A)^{1/2} \bigr)$ of
these values of $a_n$ can yield binary forms $f(x,y)$ having
associated Galois group smaller than $S_n$.  Since the number of values of $a_0,\ldots,a_{n-1}$ yielding a given value of $\mathcal A$ is $O(\mathcal A^{\varepsilon})$, the number of possible values of $a_0,\ldots,a_n$ is thus at most
$$\sum_{\mathcal A=O(X^{\frac{n}{2}})}
   O\bigl(\mathcal A^{\varepsilon}(X^{\textstyle \frac{n+1}{2}}/\mathcal A)^{1/2} \bigr)=
   O(X^{{\textstyle \frac{n+1}{2}}-{\textstyle\frac14}+\varepsilon}),$$ 
yielding the desired result.
\end{proof}

\noindent
The $O$-estimate in Theorem~\ref{nonSn} can be further improved to $O(X^{{\frac{n+1}{2}-\frac12}})$
 using the methods of~\cite{Bh}, although we shall not require this improvement here when $n>3$.  
For $n=3$, the further improved estimate $O(X)$ 
can be deduced from \cite{BhSh} (see also~\cite[Lemma~2]{Davenport} for a proof of the estimate
 $O(X^{\frac32+\varepsilon})$), while for $n=2$, we observe that every definite integral binary quadratic
 form has Galois group $S_2$. 

One interesting and useful consequence of a binary $n$-ic form $f$ having
associated Galois group $S_n$ ($n\geq 5$) is that  in that case $f$ cannot have any
nontrivial projective linear automorphisms over $\bar\Q$, i.e., there cannot exist elements in $\SL_2(\bar\Q)$ that stabilize $f$ and induce a nontrivial permutation of the roots of $f$:

\begin{theorem}\label{Sntrivstab}
Suppose $n\geq5$.  If a binary $n$-ic form $f(x,y)\in V_n(\Z)$ is irreducible with Galois group $S_n$, 
then $f$ has no projective linear automorphisms over $\bar\Q$.
\end{theorem}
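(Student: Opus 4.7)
The plan is to prove the stronger statement that the group $H\subseteq\PGL_2(\bar\Q)$ of Möbius transformations preserving the root set $R=\{\alpha_1,\ldots,\alpha_n\}$ of $f(x,1)$ is trivial; this implies the theorem, since any $\gamma\in\SL_2(\bar\Q)$ stabilizing $f$ and inducing a nontrivial permutation of the roots of $f$ would descend to a nontrivial element of $H$. Since $f$ is irreducible over $\Q$ of degree $n\geq 5$, we have $a_0\neq 0$ (else $y\mid f$), so $R$ consists of $n$ finite points in $\P^1(\bar\Q)$. Because any element of $\PGL_2$ is determined by its action on three points of $\P^1$, the permutation representation $\rho\colon H\to\Sym(R)\cong S_n$ is injective; in particular $H$ is finite.

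The key input is Galois equivariance. Because $f$ has rational coefficients, $R$ is stable under the action of $G:=\Gal(\bar\Q/\Q)$, and $G$ acts on $\PGL_2(\bar\Q)$ entrywise. For $\gamma\in H$ and $g\in G$, the transform $g(\gamma)$ still preserves $R$, so $H$ is $G$-stable, and a direct computation (using that $g$ acts on $\P^1(\bar\Q)$ by Galois conjugation) shows that $\rho(g(\gamma))=\pi(g)\,\rho(\gamma)\,\pi(g)^{-1}$, where $\pi\colon G\to S_n$ is the Galois permutation action on $R$. By hypothesis $\pi(G)=S_n$, so $\rho(H)\leq S_n$ is normalized by all of $S_n$, and hence is a normal subgroup of $S_n$.

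For $n\geq 5$ the only normal subgroups of $S_n$ are $\{1\}$, $A_n$, and $S_n$, so if $H$ is nontrivial then $|H|=|\rho(H)|\geq n!/2\geq 60$. But $H$ is a finite subgroup of $\PGL_2(\bar\Q)\hookrightarrow\PGL_2(\C)$, and by Klein's classical classification the only finite subgroups of $\PGL_2(\C)$ are cyclic, dihedral, $A_4$, $S_4$, and $A_5$, of order at most $60$. This immediately rules out $n\geq 6$ and leaves only the borderline case $n=5$ with $H\cong A_5$ realized as the icosahedral subgroup of $\PGL_2(\C)$; this is the main obstacle. To exclude it, I would use the standard orbit structure of the icosahedral $A_5\subset\PGL_2(\C)$ on $\P^1(\C)$: every point stabilizer is cyclic, so the orbit sizes are $|A_5|$ divided by the orders of cyclic subgroups of $A_5$, namely $1,2,3,5$, giving orbit sizes $60,30,20,12$. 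In particular no orbit has size $5$, and no union of such orbits has total size $5$, so no $5$-element subset of $\P^1(\C)$ is preserved by this $A_5$, contradicting the existence of $R$. Hence $H$ is trivial, completing the proof.
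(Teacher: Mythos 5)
Your proof is correct in substance, and its first half is essentially the paper's argument: both identify the projective symmetries of the root set with a subgroup of $\Sym(R)\cong S_n$ and use Galois equivariance of the entrywise action of $\Gal(\bar\Q/\Q)$ on $\PGL_2(\bar\Q)$ --- precisely your computation $\rho(g(\gamma))=\pi(g)\,\rho(\gamma)\,\pi(g)^{-1}$ --- to conclude that this subgroup is normal in $S_n$. Where you genuinely diverge is the final quantitative input. The paper cites a result of Olver~\cite[Corollary~8.68]{Olver} bounding the number of projective symmetries of a binary $n$-ic form by $4n-8$ for $n\geq 5$; since $|A_n|=n!/2>4n-8$, the normal subgroup is forced to be trivial in one line. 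You instead invoke Klein's classification of finite subgroups of $\PGL_2(\C)$, which reduces the problem to excluding the icosahedral $A_5$ at $n=5$, and you do that by the orbit-size computation (orbits of sizes $12,20,30,60$, so no invariant $5$-element set). Your route is more self-contained and elementary, at the cost of a borderline case; the paper's is shorter given the citation. One inaccuracy to fix: the parenthetical ``of order at most $60$'' is false as a statement about all finite subgroups of $\PGL_2(\C)$, since cyclic and dihedral subgroups can have arbitrarily large order, so the exclusion of $n\geq 6$ cannot rest on an order bound alone. The argument survives because for $n\geq 5$ the groups $A_n$ and $S_n$ are neither cyclic nor dihedral (e.g.\ $A_n$ is nonabelian simple, and $S_n$ contains it), hence simply do not appear on Klein's list except for $A_5$ itself; you should phrase the exclusion that way.
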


\begin{proof}
Suppose $n\geq 5$. Let $f$ be an integral binary $n$-ic form having associated Galois group $S_n\cong G\subset \Gal(\bar \Q/\Q)$.  Let $H\subset G$ denote the subgroup of those symmetries of the roots of $f$ in $\P^1(\bar\Q)$ that come from symmetries of $\P^1(\bar\Q)$ in $\PGL_2(\bar\Q)$.  Then $H$ is normal in $G$; indeed, if $h\in H$ and $g\in G$, then $ghg^{-1}$ is again in $H$, for if we write $h(x) = \frac{ax+b}{cx+d}$, then 
\[ghg^{-1}(x) = g\Bigl(\frac{ag^{-1}(x)+b}{cg^{-1}(x)+d}\Bigr)=\frac{g(a)x+g(b)}{g(c)x+g(d)}.\]
It follows from a result of Olver~\cite[Corollary~8.68]{Olver} that for $n\geq 5$, we have $|H|\leq 4n-8$.  However, for $n\geq 5$, the only subgroup of $S_n$ that is normal and of cardinality at most $4n-8$ is the trivial subgroup, and Theorem~\ref{Sntrivstab} follows.\end{proof}

\begin{corollary}\label{mcor}
  Let $h\in G_0$ be any element, where $G_0$ is any fixed compact
  subset of ${\SL}_2(\R)$.  Then all but $O(X^{{\frac{n+1}{2}-\frac14} + \varepsilon})$ of the integral binary $n$-ic  forms $f(x,y)=a_0x^n+a_1x^{n-1}y+\cdots+a_ny^n  \in \RR_X(hL)$ 
  with $a_0\neq 0$ are irreducible over $\Q$, have associated Galois group $S_n$, and satisfy $m_f=m$. $($Here again the implied constant depends only on~$n$, $G_0$, and $\varepsilon$.$)$
\end{corollary}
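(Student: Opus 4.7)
The plan is to combine the three preceding results---Lemma~\ref{reducible}, Theorem~\ref{nonSn}, and Theorem~\ref{Sntrivstab}---together with a short analysis of stabilizers. First, Lemma~\ref{reducible} bounds the number of reducible integral $f\in\RR_X(hL)$ with $a_0\neq 0$ by $O(X^{\frac{n+1}{2}-\frac12+\varepsilon})$. Second, Theorem~\ref{nonSn} bounds the number of such $f$ that are irreducible but whose Galois group over $\Q$ is smaller than $S_n$ by $O(X^{\frac{n+1}{2}-\frac14+\varepsilon})$. Adding these contributions, everything outside an exceptional set of size $O(X^{\frac{n+1}{2}-\frac14+\varepsilon})$ is irreducible over $\Q$ with associated Galois group $S_n$. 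So it suffices to show that any such $f$ automatically satisfies $m_f=m$.

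For $n\geq 5$, this is exactly where Theorem~\ref{Sntrivstab} enters. Since $f$ is irreducible with Galois group $S_n$, that theorem gives that $f$ has no nontrivial projective linear automorphism over $\bar\Q$. Hence $\Stab_{\SL_2(\bar\Q)}(f)$ is contained in the kernel $\{\pm I\}$ of $\SL_2\to\PGL_2$. Since $-I$ acts on a degree-$n$ form by the scalar $(-1)^n$, one obtains
\[ |\Stab_{\SL_2(\R)}(f)| \;=\; |\Stab_{\SL_2(\Z)}(f)| \;=\; \begin{cases} 1 & n \text{ odd},\\ 2 & n \text{ even},\end{cases} \]
so $m_f=1$, which equals $m$ in the range $n\geq 5$.

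For the remaining cases $(n,k)\in\{(3,0),(3,1),(4,0),(4,1),(4,2)\}$, where $m>1$ and generic stabilizers are nontrivial, one needs a slightly more delicate argument. Since $f$ is irreducible with Galois group $S_n$, any $\gamma\in\PGL_2(\Q)$ stabilizing $f$ induces a permutation of the $n$ roots that must commute with the $S_n$-action of $\Gal(\bar\Q/\Q)$ on those roots. This permutation thus lies in the centralizer of $S_n$ in $S_n$, which is trivial for $n\geq 3$. As a Möbius transformation fixing at least three points is the identity, $\Stab_{\PGL_2(\Q)}(f)=\{1\}$, and hence $\Stab_{\SL_2(\Z)}(f)\subseteq\{\pm I\}$ with the precise value dictated by the parity of $n$ as above. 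On the other hand, $\Stab_{\SL_2(\R)}(f)$ depends only on the real configuration of the roots of $f$ and therefore attains its generic value for every nondegenerate form of signature $(n-2k,\,2k)$. A direct verification in each of the five small cases confirms that the resulting ratio $|\Stab_{\SL_2(\R)}(f)|/|\Stab_{\SL_2(\Z)}(f)|$ coincides with the values of $m$ recorded in \S\ref{redthbq}.

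The main obstacle is the bookkeeping in this last step: one must be careful to distinguish the stabilizer in $\SL_2$ from its image in $\PGL_2$, and to handle the role of $-I$ separately in the even- and odd-degree cases. Once this is done, adding the $O(X^{\frac{n+1}{2}-\frac12+\varepsilon})$ reducible contribution and the $O(X^{\frac{n+1}{2}-\frac14+\varepsilon})$ non-$S_n$ contribution produces the total error bound $O(X^{\frac{n+1}{2}-\frac14+\varepsilon})$ asserted by the corollary.
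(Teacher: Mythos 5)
Your overall route is the paper's: use Lemma~\ref{reducible} and Theorem~\ref{nonSn} to discard all but $O(X^{\frac{n+1}{2}-\frac14+\varepsilon})$ forms and reduce to irreducible forms with Galois group $S_n$, then show $m_f=m$ for those. For $n\geq 5$ your argument coincides with the paper's (Theorem~\ref{Sntrivstab} plus the observation that the only scalar matrices in $\SL_2$ fixing a degree-$n$ form are $I$, resp.\ $\pm I$, according to parity), and your centralizer-of-$S_n$ argument showing $\Stab_{\PGL_2(\Q)}(f)$ is trivial, hence $\Stab_{\SL_2(\Z)}(f)\subseteq\{\pm I\}$, for $n=3,4$ is correct and is a nice self-contained replacement for part of what the paper gets by citation.

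The gap is in the other half of the stabilizer computation for small $n$. You assert that $\Stab_{\SL_2(\R)}(f)$ ``depends only on the real configuration of the roots'' and therefore attains its generic value for \emph{every} nondegenerate form of signature $(n-2k,2k)$. For $n=4$ this is false: the real stabilizer depends on the cross-ratio of the four roots, not merely on how many are real, and quartics whose roots lie in harmonic position have a strictly larger projective automorphism group ($D_4$ rather than the generic $V_4$). Such forms sweep out a codimension-one subvariety of $V_4(\R)$, so excluding them is not a ``direct verification in each of the five small cases'' but a genuine point-counting estimate: one must bound the number of integral forms in $\RR_X(hL)$ lying on that subvariety. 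The paper supplies exactly this step by citing \cite[Lemmas~2.2 and 2.4]{BS} for $n=4$ (and \cite[Lemma~2]{Davenport} for $n=3$). For $n=3$ your claim happens to hold, but for a reason you do not give: all nondegenerate cubics of a fixed signature lie in a single orbit under $\GL_2(\R)$ combined with scaling, so their $\SL_2(\R)$-stabilizers are conjugate; no such single-orbit statement is available for $n=4$, where there is a cross-ratio modulus, and that is precisely where your argument breaks down.
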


\begin{proof}
In the case of $n=3$, this follows directly from Theorem~\ref{nonSn} and
\cite[Lemma 2]{Davenport}, while in the case $n=4$, the argument is
identical to \cite[Proofs of Lemmas~2.2 and 2.4]{BS}.
For~$n=5$, the assertion follows from 
Theorems~\ref{nonSn} and \ref{Sntrivstab}; indeed, the stabilizer in $\GL_2(\C)$ of a binary $n$-ic
form $f$ is an extension of the projective automorphism group of $f$ over $\C$ by the group of $n$th roots
of unity in $\C^\times$, and the only $n$th roots of unity in $\R^\times$ (or $\Z^\times$) are 1 or $\pm1$ depending on whether $n$ is odd or even.  This completes the proof.
\end{proof}

\section{Averaging and cutting off the cusp}\label{avgsec}

Let $G_0$ be a compact, semialgebraic, left $K$-invariant set in $\GL_2(\R)$ that is the closure
of a nonempty open set and in which every element has determinant
greater than or equal to~$1$.
Then we may write

\begin{equation}
N_{n,k}(X)=\frac{\int_{h\in G_0}\#\{x\in \FF hL\cap V_n(\Z)^{\irr}:
  \theta(x)<X\}dh\;}{m\cdot\int_{h\in G_0}dh},
\end{equation}
where $L:=L_{n,k}$, $V_n(\Z)^\irr$ denotes the set of irreducible elements in
$V_n(\Z)$, and $dh$ is Haar measure on $\GL_2(\R)$.  The denominator of the
latter expression is an absolute constant $C_{G_0}^{n,k}$ greater than
zero.

More generally, for any $\SL_2(\Z)$-invariant subset $S \subset V_{n,k}(\Z):=
V_n(\Z)\cap V_{n,k}$, let $N(S;X)$ denote the number of irreducible
$\SL_2(\Z)$-orbits in $S$ having Julia invariant less than $X$. Let $S^{\irr}$
denote the subset of irreducible points of $S$. Then $N(S;X)$ can be
similarly expressed as

\begin{equation}\label{eq9}
N(S;X)=\frac{\int_{h\in G_0}\#\{x\in \FF hL\cap S^{\irr}: \theta(x)<X\}dh\;}{C_{G_0}^{n,k}}.
\end{equation}

Now, given $x\in V_{n,k}$, let $x_L$ denote the {unique} point in $L$
that is equivalent by an element of $\GL_2^+(\R)$ to $x$. Then 

\begin{equation}
N(S;X)=\frac{1}{C_{G_0}^{n,k}}\sum_{\substack{{x\in
      S^{\irr}}\\[.02in]{\theta(x)<X}}}\int_{h\in G_0} \#\{g \in \FF :
x=ghx_L\} dh.
\end{equation}
For a given $x\in S^{\irr}$, since $n\geq 3$, there exist a finite number of elements
$g_1,\ldots,g_r\in\GL_2^+(\R)$ satisfying $g_jx_L=x$.  We then have
$$\int_{h\in G_0} \#\{g \in \FF :x=ghx_L\} dh=\sum_j\int_{h\in G_0} \#\{g \in \FF :gh=g_j\} dh=\sum_j\int_{h\in G_0\cap\FF^{-1}g_j}dh.$$
As $dh$ is an invariant measure on $\GL_2(\R)$, we have
\begin{eqnarray*}
\sum_j\displaystyle\int_{h\in G_0\cap\FF^{-1}g_j}\!\!\!\!\!dh=\sum_j\displaystyle\int_{g\in G_0g_j^{-1}\cap\FF^{-1}}\!\!\!\!\!dg&=&\sum_j\displaystyle\int_{g\in\FF}\#\{h \in G_0 :gh=g_j\} dg \\ &=&\int_{g\in \FF} \#\{h \in G_0 :x=ghx_L\} dg.
\end{eqnarray*}
Therefore,
\begin{eqnarray*}\label{eqavg}
N(S;X)&=&\frac{1}{C_{G_0}^{n,k}}\,\sum_{\substack{x\in S^{\irr}\\[.02in]
    \theta(x)<X}} \int_{g\in\FF} \#\{h \in G_0 : x=ghx_L\}dg\\
&\!\!=\!\!&  \frac1{C_{G_0}^{n,k}}\int_{g\in\FF}
\#\{x\in S^\irr\cap gG_0L:\theta(x)<X\}dg\\[.075in] 
&\!\!=\!\!&  \frac1{C_{G_0}^{n,k}}\int_{g\in N'(t)A'\Lambda K}
\#\{x\in S^\irr\cap n \bigl(\begin{smallmatrix}t^{-1}& {}\\
  {}& t\end{smallmatrix}\bigr) \lambda \kappa G_0L:\theta(x)<X\}t^{-2} 
dn\, d^\times t\,d^\times \lambda\, d\kappa\,.
\end{eqnarray*}
Let us write $B(u,t,\lambda,X) = \nu(u) \bigl(\begin{smallmatrix}t^{-1}& {}\\ 
{}& t\end{smallmatrix}\bigr) \lambda G_0L\cap\{x\in V_{n,k}:\theta(x)<X\}$.
Then since $G_0$ is left $K$-invariant, and we may normalize Haar measure so that $\int_{\kappa\in K} d\kappa=1$, we obtain
\begin{equation}\label{avg}
N(S;X) = \frac1{C_{G_0}^{n,k}}\int_{g\in N'(t)A'\Lambda}                              
\#\{x\in S^\irr\cap B(u,t,\lambda,X)\}t^{-2}
du\, d^\times t\,d^\times \lambda\,.
\end{equation}

To estimate the number of lattice points in $B(u,t,\lambda,X)$, we
have the following proposition due to Davenport~\cite{dav:lip}. 

\begin{proposition}\label{davlem}
  Let $\mathcal R$ be a bounded, semialgebraic multiset in $\R^n$
  having maximum multiplicity $m$, and that is defined by at most $\kappa$
  polynomial inequalities each having degree at most $\ell$.  Let $\RR'$
  denote the image of $\RR$ under any $($upper or lower$)$ triangular,
  unipotent transformation of $\R^n$.  Then the number of integer
  lattice points $($counted with multiplicity$)$ contained in the
  region $\mathcal R'$ is
\[\Vol(\mathcal R)+ O(\max\{\Vol(\bar{\mathcal R}),1\}),\]
where $\Vol(\bar{\mathcal R})$ denotes the greatest $d$-dimensional 
volume of any projection of $\mathcal R$ onto a coordinate subspace
obtained by equating $n-d$ coordinates to zero, where 
$d$ takes all values from
$1$ to $n-1$.  The implied constant in the second summand depends
only on $n$, $m$, $\kappa$, and $\ell$.
\end{proposition}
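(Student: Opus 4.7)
The plan is to prove this by induction on the dimension $n$, reducing to the classical lattice-point counting lemma of Davenport \cite{dav:lip}, which handles the untransformed case $\RR' = \RR$. The main term requires no work: because every triangular unipotent transformation has determinant $1$, we have $\Vol(\RR') = \Vol(\RR)$, so the entire effort is in controlling the error term in terms of projections of the original region $\RR$.

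For the inductive step I would assume without loss of generality that $T$ is lower triangular unipotent (the upper triangular case is symmetric, slicing by the last coordinate instead of the first). Because $T$ preserves the first coordinate, for each integer $z \in \Z$ the slice
\[\RR'_z := \{(y_2, \ldots, y_n) \in \R^{n-1} : (z, y_2, \ldots, y_n) \in \RR'\}\]
is obtained from the corresponding slice $\RR_z$ of $\RR$ by an $(n-1)$-dimensional lower triangular unipotent transformation, composed with an affine shift depending only on $z$. Each slice is semialgebraic of complexity bounded uniformly in $z$ by quantities depending only on $n$, $\kappa$, and $\ell$, so the induction hypothesis applies to $\RR'_z$. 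Summing over $z \in \Z$ yields
\[\#(\RR' \cap \Z^n) \;=\; \sum_{z \in \Z} \Vol_{n-1}(\RR_z) \;+\; O\Bigl(\sum_{z \in \Z} \max_d \Vol_d(\bar{\RR}_z)\Bigr).\]
The first sum I would compare to $\int \Vol_{n-1}(\RR_t)\,dt = \Vol_n(\RR)$ via the standard Riemann-sum estimate, which applies because the semialgebraic hypothesis forces $t \mapsto \Vol_{n-1}(\RR_t)$ to be piecewise monotone with a uniformly bounded number of pieces; the resulting error is $O(\sup_t \Vol_{n-1}(\RR_t))$, and this supremum is in turn bounded by the $(n-1)$-volume of the projection of $\RR$ onto the coordinate subspace obtained by equating the first coordinate to zero.

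The main obstacle will be the bookkeeping of error terms across the induction: every projection volume $\Vol_d(\bar{\RR}_z)$ produced in the inductive step must be shown, after summation over $z$, to be controlled by a coordinate-subspace projection of $\RR$ itself of some dimension $d' \leq n-1$. This works because any $d$-dimensional projection of a slice $\RR_z$ is itself a slice of a $(d+1)$-dimensional coordinate projection of $\RR$, so one further application of the Riemann-sum estimate reduces the sum over $z$ to the volume of that higher-dimensional projection. Iterating through the induction, every error contribution is matched to the volume of a coordinate projection of $\RR$ of some dimension between $1$ and $n-1$, yielding the claimed bound $O(\max\{\Vol(\bar\RR),1\})$, where the additive $1$ absorbs the base case $n = 1$ in which a bounded interval contributes $O(1)$ lattice points.
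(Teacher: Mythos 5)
The paper does not actually supply a proof of this proposition: it cites Davenport's Lipschitz-principle paper \cite{dav:lip} and asserts that his induction-on-dimension argument adapts without significant change to bounded semialgebraic multisets and to images under unipotent triangular maps. Your reconstruction follows exactly that inductive slicing strategy, and most of it is sound: you slice along the coordinate preserved by $T$, observe that each slice of $\RR'$ is a translate of a unipotent image of the corresponding slice of $\RR$ (you should say explicitly that the inductive hypothesis applies to such translates because translation preserves both the complexity hypotheses and the volumes of all coordinate projections, so the translated slice may simply be fed into the hypothesis), and you correctly match every downstream error term to a coordinate projection of $\RR$ of dimension between $1$ and $n-1$.

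The one step that does not hold up as written is the Riemann-sum comparison. You assert that ``the semialgebraic hypothesis forces $t \mapsto \Vol_{n-1}(\RR_t)$ to be piecewise monotone with a uniformly bounded number of pieces.'' Semialgebraicity of $\RR$ does not give this in any elementary way: the fiber-volume function of a semialgebraic set is in general not even semialgebraic (the area of $\{(x,y): t \le x \le 2,\ 0 \le y \le 1/x\}$ is $\log(2/t)$), and although such functions are tame in a suitable o-minimal structure, obtaining piecewise monotonicity with a number of pieces bounded only in terms of $n$, $\kappa$, $\ell$ requires parametric integration theorems far beyond anything in Davenport --- and you would need this uniformly at every level of the induction and for every projection. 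The repair is the elementary comparison Davenport actually makes: by Fubini, $\sum_{z\in\Z}\Vol_{n-1}(\RR_z)=\int_{\R^{n-1}}\#\{z\in\Z:(z,y)\in\RR\}\,dy$; each line parallel to the $x_1$-axis meets $\RR$ in at most $\kappa\ell+1$ intervals (each defining polynomial restricted to the line is univariate of degree at most $\ell$), so the integrand equals the length of $\RR\cap\ell_y$ plus $O_{\kappa,\ell,m}(1)$ on the projection $\pi_1(\RR)$ and vanishes off it, whence the sum equals $\Vol_n(\RR)+O(\Vol_{n-1}(\pi_1\RR))$ with no monotonicity claim at all. The same device handles the summed $O(1)$-per-slice errors, which your displayed formula silently drops: the number of nonempty integer slices is at most the length of the one-dimensional projection onto the $x_1$-axis plus $O_{\kappa,\ell}(1)$, and hence is absorbed into $O(\max\{\Vol(\bar{\RR}),1\})$. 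With these repairs your induction closes and coincides with the argument the paper is implicitly invoking.
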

Although Davenport states the above lemma only for compact
semialgebraic sets $\mathcal R\subset\R^n$, his proof adapts without
significant change to the more general case of a bounded semialgebraic
multiset $\mathcal R\subset\R^n$, with the same estimate applying also to
any image $\mathcal R'$ of $\mathcal R$ under a unipotent triangular
transformation.

By our construction of $L$, the coefficients of the binary
$n$-ic forms in $G_0L$ are all uniformly bounded.
Let $C^n$ be a constant that bounds the absolute value of the leading
coefficient $a_0$ of all the forms $a_0x^n+a_1x^{n-1}y+\cdots+a_ny^n$ in
$G_0L$.  (We choose $C^n$ instead of $C$ to simplify the exponents of $C$
in the calculations which follow.)  

We then have the following lemma on the number of irreducible lattice
points in $B(u,t,\lambda,X)$: 

\begin{proposition}\label{aneq0}
The number of lattice points $(a_0,\ldots,a_n)$ in $B(u,t,\lambda,X)$ with $a_0\neq0$ is
$$\left\{
\begin{array}{cl}
0 & \mbox{{\em if} $\frac{C\lambda}{t}<1$};\\[.1in]
\Vol(B(u,t,\lambda,X)) + O(\max\{ t^n \lambda^{n^2},1\}) &\mbox{{\em otherwise,}}
\end{array}\right.$$
where the implied constant in the big-$O$ expression depends only
on $n$ and $G_{0}$.
\end{proposition}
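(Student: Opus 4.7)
The strategy is to dispatch the vanishing case directly from the coefficient bounds built into $G_0L$, and in the remaining regime to invoke Proposition~\ref{davlem} for the unipotent triangular $\nu(u)$ acting on the semialgebraic preimage $\alpha(t)\lambda G_0L\cap\{\theta<X\}$.

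For the first case, the action of $\nu(u)$ sends $f(x,y)\mapsto f(x+uy,y)$, which preserves $a_0$; and the scaling $\alpha(t)\lambda$ multiplies $a_0$ by $t^{-n}\lambda^n$.  Writing any $f\in B(u,t,\lambda,X)$ as $f=\nu(u)\alpha(t)\lambda hf_0$ with $h\in G_0$ and $f_0\in L$, the choice of $C$ gives $|a_0(hf_0)|\leq C^n$, so $|a_0(f)|\leq(C\lambda/t)^n$.  When $C\lambda/t<1$ this forces $a_0=0$ for every integer point.

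For the main case $C\lambda/t\geq 1$, set $\mathcal R=\alpha(t)\lambda G_0L\cap\{x\in V_{n,k}:\theta(x)<X\}$, so that $B(u,t,\lambda,X)=\nu(u)\mathcal R$.  The set $\mathcal R$ is semialgebraic: $G_0$ is semialgebraic by hypothesis, $L$ is semialgebraic by Section~\ref{cpt}, and $\{f:\theta(f)<X\}$ is the Tarski--Seidenberg projection of $\{(f,t_0,u_0):u_0>0,\ \tilde F(t_0,u_0)<Xu_0^n\}$; moreover, the number and degree of defining polynomial inequalities are bounded in terms of $n$ and $G_0$ alone, since $u,t,\lambda,X$ act only by polynomial transformations of fixed inequalities.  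Proposition~\ref{davlem} then yields
\[\#\bigl(B(u,t,\lambda,X)\cap V_n(\Z)\bigr)=\Vol(B(u,t,\lambda,X))+O(\max\{\Vol(\bar{\mathcal R}),1\}),\]
where $\Vol(\bar{\mathcal R})$ is the largest volume of a coordinate projection of $\mathcal R$.

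To bound $\Vol(\bar{\mathcal R})$, the scalings give the coefficient bound $|a_i|\ll\lambda^n t^{2i-n}$ on $\alpha(t)\lambda G_0L$.  The projection of $\mathcal R$ onto the coordinate subspace indexed by $I\subset\{0,\ldots,n\}$ thus has volume $\ll\prod_{i\in I}\lambda^n t^{2i-n}$; for $|I|=d$ this is maximized by $I=\{n-d+1,\ldots,n\}$, giving $f(d):=(C\lambda)^{nd}t^{d(n-d+1)}$.  Since
\[\frac{f(d+1)}{f(d)}=(C\lambda)^{n}t^{n-2d}\geq t^{2(n-d)}\geq(\sqrt{3}/2)^{2n}\]
using $C\lambda\geq t\geq\sqrt{3}/2$ and $d\leq n$, we obtain $f(d)\ll f(n)=(C\lambda)^{n^2}t^n$ uniformly in $d\in\{1,\ldots,n\}$, whence $\Vol(\bar{\mathcal R})=O(\lambda^{n^2}t^n)$.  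The integer points of $B(u,t,\lambda,X)$ with $a_0=0$ lie in the slice $\{a_0=0\}\cap B$, whose lattice count is bounded by the same $O(\lambda^{n^2}t^n)$ projection estimate, so omitting them leaves the asymptotic unchanged.  The main subtlety is this last comparison of projection volumes: it is precisely the regime assumption $C\lambda\geq t$ that makes $d=n$ the maximizing projection dimension and pins down the specific error term $t^n\lambda^{n^2}$.
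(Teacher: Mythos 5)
Your proof is correct and follows essentially the same route as the paper: the first case via the bound $|a_0|\leq (C\lambda/t)^n<1$, and the second via Davenport's lemma with the projection-volume bound $O(\lambda^{n^2}t^n)$, obtained from the same coefficient scalings $\lambda^n t^{2i-n}$ and the inequality $C\lambda\geq t$ (your monotonicity-in-$d$ bookkeeping is just a tidier version of the paper's ``exchange powers of $t$ for powers of $\lambda$''). Your explicit remark that the $a_0=0$ lattice points are absorbed into the same error term is a point the paper leaves implicit, and is a welcome addition.
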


\begin{proof}
  If ${C {\lambda/ t}<1}$, then $a_{0}=0$ is the only possibility for an
  integral binary $n$-ic form $a_{0}x^n+\ldots + a_{n}y^n$ in
  $B(u,t,\lambda,X)$, and any such form is reducible.  Indeed, notice that 
  any element in $B(u, t, \lambda, X)$ has first coordinate bounded by 
  $C^n \lambda^n / t^n$, which is $< 1$ if $C\lambda/t < 1$.  If
  ${C {\lambda/ t}\geq1}$, then $\lambda$ and $t$ are positive numbers
  bounded from below by $(\sqrt3/2)^{1/2}/C$ and $(\sqrt3/2)^{1/2}$ respectively.
  In this case, one sees that the projection
  of $B(u,t,\lambda,X)$ onto $a_{0}=0$ has volume $O(t^n\lambda^{n^2})$:
  the coefficients of forms in $G_{0} L$ are uniformly bounded, and acting by 
  the scalar $\lambda$ scales each coefficient by a factor of $\lambda^n$.  Acting
  by the scalar matrix $\bigl(\begin{smallmatrix}t^{-1}& {}\\ 
{}& t\end{smallmatrix}\bigr)$ then multiplies the $k$th coefficient by $t^{-n + 2k}$.  
  Thus, after acting by these two elements, $a_{k}$ is multiplied by a factor of
  $\lambda^{n} t^{-n + 2k}$.  The product of these numbers, for $0 \leq k \leq n$, 
  is $\lambda^{n(n+1)} t^{0}$, and represents a big-$O$ upper bound for the volume of
  $B(0, t, \lambda, X)$ and therefore also $B(u,t,\lambda,X)$.  Therefore, if we project this region onto $a_{0} = 0$, an upper bound for the
  volume of this projection is given by $\lambda^{n(n+1)} t^{0} / (\lambda^{n} t^{-n}) = \lambda^{n^2} t^{n}$,
  as claimed.  
      
  Now consider any other projection of $B(u, t, \lambda, X)$ onto one of the subspaces $a_{k} = 0$, say.
  The volume of this projection is given by $O(\lambda^{n^2} t^{n - 2k})$.  This is evidently $O(\lambda^{n^2} t^{n})$, since $t$ is uniformly bounded from below.  If we want to project onto a space defined by an additional 
  condition $a_{\ell} = 0$, say, this space will have volume bounded by 
  $O(\lambda^{n^2} t^{n} / (\lambda^{n} t^{-n + 2\ell})) = O(\lambda^{n^2 - n} t^{2n - 2\ell})$.  
  Although the exponent of $t$ might increase, we use the fact that $C\lambda > t$ to exchange $n - 2\ell$ 
  (which is $\leq n$) factors of $t$ for $n - 2\ell \leq n$ factors of $\lambda$, which shows that this expression
  is still $O(\lambda^{n^2} t^{n})$.  It is clear that we may interchange powers of $t$ for powers of $\lambda$
  in any projection of the original region onto any proper subspace spanned by coordinate axes to get an upper
  bound of $O(\lambda^{n^2} t^{n})$ on their volumes.
  The lemma then follows from Proposition \ref{davlem}.
\end{proof}

In $(\ref{avg})$, since $L$ (and therefore also $G_0L$) only
contains points with Julia invariant at least $1$, we observe (by the definition
of $B(u,t,\lambda,X)$) that the integrand 
will be nonzero only if $t<C\lambda$ and $\lambda<X^{1/2n}$.  Thus we
may write
\begin{equation}\label{eqpre}
\begin{array}{l}
N(V_n(\Z)\cap V_{n,k};X) \\[.1in]
\displaystyle{\;\;\;=\frac1{C_{G_0}^{n,k}}
\int_{\lambda=(\sqrt{3}/2)^{1/2}/C}^{X^{1/2n}}
\int_{t=(\sqrt3/2)^{1/2}}^{C\lambda} \int_{N'(t)}
(\Vol(B(u,t,\lambda,X)) +
O(\max\{{t^n}{\lambda^{n^2}},1\})) t^{-2} 
du \,d^\times t\, d^\times \lambda}\\[.2in]
\qquad\qquad\qquad\qquad\qquad\qquad\qquad\qquad\qquad\qquad\qquad\qquad
+\,O(X^{{\textstyle{\frac{n}{2}}}+\varepsilon}),
\end{array}
\end{equation}
where the latter error term is
 due to the estimate on reducible forms in Lemma~\ref{reducible}.

Let us first consider the evaluation of the integral of the second summand in (\ref{eqpre}).
First, we observe that $t^{n} \lambda^{n^2} \gg 1$,
so that the integral of the second summand is bounded from above by (a constant factor times) 
\begin{equation}\label{eqfirst}
\frac1{C_{G_0}^{n,k}}
\int_{\lambda=(\sqrt{3}/2)^{1/2}/C}^{X^{1/2n}}
\int_{t=(\sqrt3/2)^{1/2}}^{C\lambda} \int_{N'(t)}
t^{n} \lambda^{n^2} t^{-2} dn\, d^\times t\, d^\times \lambda\ll 
\int_{\lambda=(\sqrt3/2)^{1/2}/C}^{X^{1/2n}}
\lambda^{n^2} t^{n-2} 
\Big |_{t = (\sqrt{3}/2)^{1/2}}^{t = C \lambda} \, d^{\times} \lambda 
\end{equation}
\begin{equation}\label{eqsecond}
\ll  \int_{\lambda=(\sqrt3/2)^{1/2}/C)}^{X^{1/2n}} C^{n-2} \lambda^{n^2 + n - 2} d^{\times} \lambda = 
C^{n-2} \lambda^{n^2 + n - 2} \Big|_{\lambda = (\sqrt{3}/2)^{1/2}/C}^{\lambda=X^{1/2n}} = O(X^{\textstyle\frac{n+1}{2} \!-\! \frac{1}{n}}). 
\end{equation}

Meanwhile, the integral of the first summand is
\begin{equation}\label{eq16}
\frac1{C_{G_0}^{n,k}}\int_{h\in G_0}{\rm Vol}
(\mathcal R_X(hL)){dh} -O \Bigl( \int_{\lambda=(\sqrt3/2)^{1/2}/C}^{X^{1/2n}}
\int_{t=C\lambda}^{\infty}\int_{N'(t)}
\Vol(B(u,t,\lambda,X))t^{-2} 
dn \,d^\times t \,d^\times \lambda\Bigr).
\end{equation}
However, ${\rm Vol}(\mathcal R_X(hL))$ is independent of $h$, so that the first term in (\ref{eq16}) is simply
${\rm Vol}(\mathcal R_X(L))$.  Next, using the fact that
${\rm Vol}(B(u,t,\lambda,X))=O(\lambda^{n(n+1)})$, and carrying out
the integration in the second term of (\ref{eq16}) exactly as in (\ref{eqfirst})--(\ref{eqsecond}),
we find that this term is also $O(X^{\frac{n+1}{2} - \frac{1}{n}})$:
\begin{equation}\label{eqfirst2}
\frac1{C_{G_0}^{n,k}}
\int_{\lambda=(\sqrt3/2)^{1/2}/C}^{X^{1/2n}}
\int_{t=C\lambda}^\infty \int_{N'(t)}
\lambda^{n(n+1)} t^{-2} dn\, d^\times t\, d^\times \lambda\ll 
-\int_{\lambda=(\sqrt3/2)^{1/2}/C}^{X^{1/2n}}
\lambda^{n^2+n} t^{-2} 
\Big |_{t = C \lambda}^{t = \infty} \, d^{\times} \lambda 
\end{equation}
\begin{equation}\label{eqsecond2}
\ll \int_{\lambda=(\sqrt3/2)^{1/2}/C}^{X^{1/2n}} C^{-2} \lambda^{n^2 + n - 2} d^{\times} \lambda = 
C^{-2} \lambda^{n^2 + n - 2} \Big|_{(\sqrt3/2)^{1/2}/C}^{X^{1/2n}} = O(X^{\textstyle\frac{n+1}{2} \!-\! \frac{1}{n}}). 
\end{equation}

We conclude that
\begin{equation}\label{bigint}
N_{n,k}(X)={\rm Vol}(\mathcal R_X(L))+O(X^{\textstyle\frac{n+1}{2}\!-\!\frac1n}).
\end{equation}
This proves Theorem \ref{refbq}.

\begin{remark}{\em \label{quadraticremark} 
The proof we have given for $n\geq 3$ also adapts easily to the case      
$n=2$, $k=1$.  Indeed, rather than being finite, the stabilizer in $\SL_2(\R)$     
of a definite binary quadratic form is compact and conjugate to $K$.  In the         
usual way, we may then replace occurrences of cardinalities of sets of group elements        
with integrals over $K$, e.g., $\#\Stab_{\SL_2(\R)}(v)$ is replaced by   
$\int_{\kappa\in K} d\kappa$ (which we may normalize to be 1).  All other            
arguments then hold without any essential change, yielding Theorem~1 for $n=2$ as    
well.}
\end{remark}

\section{Congruence conditions}\label{bfcong}

We may prove a version of Theorem~\ref{refbq} for a set in
$V_{n,k}(\Z)$ that is defined by a finite number of congruence conditions:

\begin{theorem}\label{cong}
  Suppose $S$ is an   $\SL_2(\Z)$-invariant 
 subset of $V_{n,k}(\Z)$ that is defined by 
  congruence conditions modulo finitely many prime powers.
Then we have
\begin{equation}\label{ramanujan}
N(S;X)
  = c_{n,k}\cdot  \prod_{p} \mu_p(S)\cdot X^{\textstyle \frac{n+1}{2}} + O_{S}(X^{\textstyle\frac{n+1}{2}\!-\!\frac1n}),
\end{equation}
where $\mu_p(S)$ denotes the density of the $p$-adic closure of $S$ in $V_n(\Z_p)$.
\end{theorem}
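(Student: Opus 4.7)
The plan is to deduce Theorem~\ref{cong} from the proof of Theorem~\ref{refbq} by decomposing $S$ into finitely many translates of a sublattice of $V_n(\Z)$ and re-running the averaging argument of Section~\ref{avgsec} on each translate. Since $S$ is defined by congruence conditions modulo finitely many prime powers, I would first fix an integer $M$ such that membership in $S$ depends only on the residue class modulo $M$, and write
\[ S = \bigsqcup_{s \in V_n(\Z/M\Z)_S} \bigl((s + MV_n(\Z))\cap V_{n,k}\bigr), \]
where $V_n(\Z/M\Z)_S$ denotes the image of $S$ in $V_n(\Z/M\Z)$. By the Chinese remainder theorem, $|V_n(\Z/M\Z)_S|/M^{n+1} = \prod_p \mu_p(S)$, since $\mu_p(S) = 1$ for every prime $p \nmid M$.

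For each fixed residue class $s + MV_n(\Z)$, I would repeat the averaging computation of Section~\ref{avgsec} verbatim, except that Proposition~\ref{aneq0} must be replaced by its analog for the shifted sublattice: counting points of $s + MV_n(\Z)$ in the region $B(u,t,\lambda,X)$ is equivalent to counting points of the full lattice $V_n(\Z)$ in $\frac{1}{M}(B(u,t,\lambda,X)-s)$, a translated and rescaled bounded semialgebraic region of the same essential shape. Applying Proposition~\ref{davlem} to this rescaled region yields the main term $\Vol(B(u,t,\lambda,X))/M^{n+1}$ together with an error of $O_M(\max\{t^n\lambda^{n^2},1\})$. Integrating over $N'(t)A'\Lambda$ as in (\ref{eqpre})--(\ref{bigint}), and using Lemma~\ref{reducible} (which applies unchanged since the count of reducible points in $s+MV_n(\Z)$ is bounded above by the count in $V_n(\Z)$) to absorb the reducible contribution, produces
\[ N\bigl((s+MV_n(\Z))\cap V_{n,k}^{\irr};X\bigr) = \frac{1}{M^{n+1}}\, c_{n,k}\, X^{\frac{n+1}{2}} + O_M(X^{\frac{n+1}{2}-\frac{1}{n}}). \]

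Summing over the $|V_n(\Z/M\Z)_S|$ residue classes then yields the stated asymptotic, with accumulated error still bounded by $O_S(X^{\frac{n+1}{2}-\frac{1}{n}})$ because $|V_n(\Z/M\Z)_S| \leq M^{n+1}$ depends only on $S$. The main modest technicality is verifying that Proposition~\ref{davlem} applied to $\frac{1}{M}(B(u,t,\lambda,X)-s)$ produces an error term of exactly the shape required in Proposition~\ref{aneq0}; this is essentially immediate because rescaling by the fixed constant $M$ preserves the semialgebraic structure, changes each coordinate projection volume by a uniform factor, and thus only alters the implicit constants in a way that is absorbed into $O_S$.
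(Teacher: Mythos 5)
Your proposal is correct and follows essentially the same route as the paper: decompose $S$ into finitely many translates of $M\cdot V_n(\Z)$, rerun the averaging argument with all projection volumes scaled by the appropriate power of $1/M$, and identify the resulting density $|V_n(\Z/M\Z)_S|/M^{n+1}$ with $\prod_p\mu_p(S)$. The details you supply (applying Proposition~\ref{davlem} to the rescaled region and absorbing the reducible count via Lemma~\ref{reducible}) are exactly what the paper's terser argument is implicitly invoking.
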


To obtain Theorem~\ref{cong}, note that the set
$S\subset V_{n,k}(\Z)$ in Theorem~\ref{cong} may be
viewed for some fixed integer $m$ as the intersection of $V_{n,k}$ with the union $U$ of (say)
$\tau$ translates $L_1,\ldots,L_\tau$ of the lattice $m\cdot V_n(\Z)$.  For
each such lattice translate $L_j$, we may use formula (\ref{avg}) and
the discussion following that formula to compute $N(L_j\cap
V_{n,k};X)$, where each $d$-dimensional volume is scaled by a factor
of $1/m^d$ to reflect the fact that our new lattice has been scaled by
a factor of $m$.  Proceeding as in \S6 
then gives by the identical arguments:
\begin{equation}\label{sestimate3}
N(S;X) = \tau m^{-(n+1)}{\Vol(\RR_X(v))} + O_S(X^{\textstyle\frac{n+1}{2}\!-\!\frac1n}).
\end{equation}

\noindent
Finally, the identity $\tau m^{-(n+1)}=\prod_p\mu_p(S)$ yields (\ref{ramanujan}).  

\subsection*{Acknowledgments}
We thank John Cremona, Peter Sarnak, Arul Shankar, and Michael Stoll for helpful conversations. This work was done
in part while the authors were at MSRI during the special semester on Arithmetic Statistics.
The first author was supported by a Simons Investigator Grant and NSF grant~DMS-1001828.


\end{document}